%% 
%% Copyright 2007, 2008, 2009 Elsevier Ltd
%% 
%% This file is part of the 'Elsarticle Bundle'.
%% ---------------------------------------------
%% 
%% It may be distributed under the conditions of the LaTeX Project Public
%% License, either version 1.2 of this license or (at your option) any
%% later version.  The latest version of this license is in
%%    http://www.latex-project.org/lppl.txt
%% and version 1.2 or later is part of all distributions of LaTeX
%% version 1999/12/01 or later.
%% 
%% The list of all files belonging to the 'Elsarticle Bundle' is
%% given in the file `manifest.txt'.
%% 

%% Template article for Elsevier's document class `elsarticle'
%% with numbered style bibliographic references
%% SP 2008/03/01

%\documentclass[preprint,12pt]{elsarticle}

%% Use the option review to obtain double line spacing

%\documentclass[final,review,3pt,times]{elsarticle}

%%\documentclass[preprint,3p,number]{elsarticle}

%% Use the options 1p,twocolumn; 3p; 3p,twocolumn; 5p; or 5p,twocolumn
%% for a journal layout:
%% \documentclass[final,1p,times]{elsarticle}
%% \documentclass[final,1p,times,twocolumn]{elsarticle}
 \documentclass[authoryear,final,3p,times]{elsarticle}

 \usepackage{etex}
%% For including figures, graphicx.sty has been loaded in
%% elsarticle.cls. If you prefer to use the old commands
%% please give \usepackage{epsfig}

%% The amssymb package provides various useful mathematical symbols
\usepackage{amssymb}
\usepackage{amsmath}%causes warning 'unable to redefine math accent \v'
\usepackage{amsfonts}
\usepackage{mathtools}
\usepackage{bm}
\usepackage{tikz}
\usepackage{dsfont}
\usetikzlibrary{trees,shapes}
\usepackage{pifont}
\usepackage{pgfplots}
\usepackage{subcaption}
\usepackage[all]{xy}

 % \newcommand{\pt}{(\mathcal{T},\Theta_{\mathcal{T}})} 

%% The amsthm package provides extended theorem environments
\usepackage{amsthm}

%% The lineno packages adds line numbers. Start line numbering with
%% \begin{linenumbers}, end it with \end{linenumbers}. Or switch it on
%% for the whole article with \linenumbers.
%% \usepackage{lineno}
%\pgfplotsset{compat=1.12}
\newdefinition{example}{Example}
\newtheorem{theorem}{Theorem}

\newtheorem{proposition}{Proposition}
\newdefinition{definition}{Definition}
%\newproof{proof}{Proof}
\newtheorem{corollary}{Corollary}

\usepackage{verbatim}
\usepackage{array,amsmath,booktabs}
\usepackage{sansmath}
\usepackage{hyperref}
\usepackage{natbib}
\pgfplotsset{compat=newest}
%\journal{}
 %\pgfplotsset{compat=1.12}
 
 \usepackage{bbm}

\begin{document}

\begin{frontmatter}
%% Title, authors and addresses

%% use the tnoteref command within \title for footnotes;
%% use the tnotetext command for theassociated footnote;
%% use the fnref command within \author or \address for footnotes;
%% use the fntext command for theassociated footnote;
%% use the corref command within \author for corresponding author footnotes;
%% use the cortext command for theassociated footnote;
%% use the ead command for the email address,
%% and the form \ead[url] for the home page:
%% \title{Title\tnoteref{label1}}
%% \tnotetext[label1]{}
%% \author{Name\corref{cor1}\fnref{label2}}
%% \ead{email address}
%% \ead[url]{home page}
%% \fntext[label2]{}
%% \cortext[cor1]{}
%% \address{Address\fnref{label3}}
%% \fntext[label3]{}

\title{Sensitivity analysis beyond linearity}

%% use optional labels to link authors explicitly to addresses:
%% \author[label1,label2]{}
%% \address[label1]{}
%% \address[label2]{}

\author{Manuele Leonelli}

\address{School of Mathematics and Statistics, University of Glasgow, UK.}

\begin{abstract}
A wide array of graphical models can be parametrised to have atomic probabilities represented by monomial functions. Such monomial structure has proven very useful when studying robustness under the assumption of a multilinear model where all monomial have either zero or one exponents. Robustness in probabilistic graphical models is usually investigated by varying some of the input probabilities and observing the effects of these on output probabilities of interest.  Here the assumption of multilinearity is relaxed and a general approach for sensitivity analysis in non-multilinear models is presented. It is shown that in non-multilinear models sensitivity functions have a polynomial form, conversely to multilinear models where these are simply linear. The form of various divergences and distances under different covariation schemes is also formally derived. Proportional covariation is proven to be optimal in non-multilinear models under some specific choices of varied parameters.  The methodology is illustrated throughout by an educational application.
\end{abstract}

\begin{keyword}
Covariation \sep Monomial models \sep Probabilistic graphical models \sep Sensitivity analysis \sep Staged trees
%% keywords here, in the form: keyword \sep keyword

%% PACS codes here, in the form: \PACS code \sep code

%% MSC codes here, in the form: \MSC code \sep code
%% or \MSC[2008] code \sep code (2000 is the default)

\end{keyword}

\end{frontmatter}

%% \linenumberss

%% main text
\section{Introduction}
\label{sec:introduction}
Sensitivity methods have received great attention in the literature of probabilistic graphical models in the past twenty years. Sensitivity analysis is a fundamental part of any applied analysis, carried out to validate the construction of a probabilistic graphical model and investigate its robustness to misspecification of its probabilities. Such methods have been successfully used in a variety of applications \citep[e.g.][]{Nur2009,Oberguggenberger2009,Pollino2007,Uusitalo2007}.

Research has mostly focused on Bayesian network (BN) models \citep{Koller2009,Smith2010}, although sensitivity results also exist for Markov networks \citep{Chan2005b} and chain event graphs \citep{Leonelli2017}. Sensitivity analysis in BNs usually consists of two phases: first some parameters of the model are varied and the effect of these variations on output probabilities of interest are investigated; second, once parameter variations are identified, the effect of these are summarized by a distance or divergence measure between the original and the varied distributions underlying the BN. Although sensitivity methods exist for continuous random variables under the assumption of Gaussianity \citep[e.g][]{Castillo2003, Gomez2013, Gorgen2018}, henceforth we focus on the most common case of discrete random variables only.

For the first phase of a sensitivity analysis, a simple mathematical function, usually termed \textit{sensitivity function}, describes an output probability of interest as a function of the BN parameters. This is a (multi-) linear function of the varied parameters for marginal output probabilities \citep{Castillo1997,Coupe2002}. Conversely, if the probability of interest is a conditional probability, then the sensitivity function is a ratio of (multi-) linear functions. 

For the second phase, the Chan-Darwiche distance \citep{Chan2005}, Kullback-Leibler divergence \citep{Kullback1951} and $\phi$-divergences \citep{Ali1966} are often used to measure the overall effect of parameter variations. One important line of research has focused on identifying parameter \textit{covariations}, i.e. ways to adjust parameters so to respect the sum to one condition after a parameter variation, that minimize such distances. Proportional covariation \citep{Laskey1995,Renooij2014}, which assigns the same proportion of residual probability mass to covarying paramaters after a variation, is the gold-standard method since this has been shown to minimize the above-mentioned divergences in a variety of settings \citep{Chan2002,Leonelli2017}, although not all \citep{Leonelli2018}.
 
Most of the above-mentioned results, although specifically derived for BNs, hold for a variety of models whose atomic probabilities can be written as a multilinear polynomial \citep{Leonelli2017}. The multilinear structure of atomic probabilities in BNs has been known for quite some time \citep{Castillo1995,Darwiche2003}, but other models entertain the same property under specific parametrisations, for instance stratified staged trees \citep{Gorgen2015}, context-specific BNs \citep{Boutilier1996} and influence diagrams \citep{Leonelli2017a}.

The development of sensitivity methods for models whose atomic probabilities cannot be written as multilinear polynomials have been limited. Results have been derived for dynamic Bayesian networks (DBNs) \citep{Charitos2006,Charitos2006a}, Markov chains \citep{Cooman2008} and hidden Markov models \citep{Amsalu2017,Renooij2012}. The atomic probabilities of all these model classes have a non-square-free polynomial representation, as demonstrated in \citet{Brandherm2004}
since they all have a DBN characterisation. Non-multilinear atomic probabilities are often associated to models whose probabilities are recursively updated through time in a dynamic fashion, although this does not necessarily have to be the case as demonstrated by the examples below.

This work presents a general framework for sensitivity analysis in models whose atomic probabilities have a non-multilinear structure and therefore can be applied to the already mentioned model classes of DBNs and hidden Markov models. The monomial representation of a statistical model introduced in \citet{Leonelli2018} is used here to encompass all classes of discrete models with non-multilinear atomic probabilities. For such models, the form of the sensitivity functions and their properties are derived. Furthermore, results about the computation of the CD distance and $\phi$-divergences under various covariation schemes are derived. In particular, it is proven that, for specific choices of parameters to be varied, proportional covariation is optimal, in the sense that it minimizes the CD distance between the original and varied distributions amongst all possible ways to covary parameters. Therefore, this work extends the results of \citet{Leonelli2017} for multilinear models to non-multilinear ones, as well as proposing sensitivity methods similar to those of \citet{Renooij2012} and \citet{Charitos2006} but which apply to a much  more general class of models.

The paper is structured as follows. Section \ref{sec:monomial} reviews monomial models and shows that staged trees have in general a non-multilinear polynomial representation. This section further introduces a running example from an educational application. Section \ref{sec:cov} reviews covariation methods for probabilities. Section \ref{sec:sensi} reports the derivations of the sensitivity functions for non-multilinear models, whilst Section \ref{sec:div} deals with divergences and their computation. The paper is concluded with a discussion.

\section{Monomial models}
\label{sec:monomial}
A review of monomial models, in short MMs, as introduced in \citet{Leonelli2018} is given first. Let $\mathbb Y$ be a finite set with $q$ elements  and 
 $\operatorname{P}$  a strictly positive probability density function for $\mathbb Y$.  
Let $\#\mathbb{Y}=q$, call $ y\in \mathbb Y$ an atom and $\operatorname{P}(y)$ the atomic probability of $ y$.  
The generic probability $\operatorname{P}$ can be seen as a point in the interior set of the $q$-dimensional simplex, i.e. $\operatorname{P}\in \Delta_{q-1}$. 
Next, a particular class of parametric statistical models, called MMs, is associated to $\mathbb Y$.

Let $[k]=\{1,2,\ldots,k\}$.
A MM is defined by three elements:  a $q\times k$ matrix $A$ with non-negative integer entries, $A\in\mathcal{M}_{q\times k}(\mathbb{Z}_{\geq 0})$;  a $k$-dimensional parameter vector $\theta$ with positive real entries,
$\theta=(\theta_i)_{i\in[k]}\in\mathbb{R}^k_{>0}$; and a partition $S=\{S_1,\dots,S_n\}$ of $[k]$. 
There is a row of $A$ for each atom $y$ and $A_y$ indicates the $y$-th row of $A$. 
The atomic probability of $y\in\mathbb{Y}$ given $\theta$ and $A$ is defined as  $\operatorname{P}( y ) = \prod_{i \in [k]}  \theta_i^{ A_{ y,i}}=\theta^{A_y}$. The partition $S$ of $[k]$ is such that $\theta_{S_i}=(\theta_j)_{j\in S_i}\in \Delta_{\# S_i-1}$. The atomic probability of $y\in\mathbb{Y}$ can then be written as 
\[
\operatorname{P}(y)=\prod_{i\in[n]}\prod_{j\in S_i}\theta_j^{A_{y,j}}=\prod_{i\in[n]}\theta_{S_i}^{A_{y,S_i}},
\]
where $\theta_S^{A_{y,S}}=\prod_{i\in S}\theta_i^{A_{y,i}}$ denotes the monomial associated to an event $y\in\mathbb{Y}$ where only parameters $\theta_i$ for $i\in S$ can have non-zero exponent.  For $A\in\mathcal{M}_{q\times k}(\mathbb{Z}_{\geq 0})$, $B\subseteq [q]$ and $C\subseteq [k]$,  $A_{B,C}$ denotes the submatrix of $A$ with $B$ rows and $C$ columns.

\begin{definition}
\label{def:MM}
 The MM  over $\mathbb Y$ associated to $A$, $\theta$ and $S$, where $S$ is such that $\theta_{S_i}\in\Delta_{\# S_i-1}$, is  defined as 
\begin{align*}
\operatorname{MM}(A,\theta, S) =
\left\{
\operatorname{P}  \in \Delta_{q-1} : \operatorname{P}(y) = \prod_{i\in[n]}\theta_{S_i}^{A_{y,S_i}}  \mbox{ for }y\in\mathbb{Y}  \mbox{ and } \theta\in\mathbb{R}^k_{>0}
\right\} 
\end{align*} 
 A $\operatorname{MM}(A,\theta,S)$ is said to be multilinear if $A\in\mathcal{M}_{q\times k}(\{0,1\})$.
\end{definition} 
A MM is multilinear if all its monomials are square free, i.e. the exponents of the parameters are either zero or one. \citet{Leonelli2017} and \citet{Leonelli2018} give a thorough investigation of sensitivity analysis in multilinear MMs. Here conversely the focus is on models which are not necessarily multilinear.

\begin{example}
\label{ex:1}
Consider a simple coin toss game. The probability of head (H) is $\theta_1$, whilst tail (T) has probability $\theta_2$, where $\theta_1+\theta_2=1$. If the result of the first toss is head, then the coin is tossed a second time. This situation can be represented by a MM with parameter vector $\theta=(\theta_1,\theta_2)$, degenerate partition of $[2]$ including one element only, and matrix $A$ defined as
\[
A=\begin{pmatrix}
2&0\\
1&1\\
0&1\\
\end{pmatrix}
\]
where the first column of $A$ relates to $\theta_1$ and the second to $\theta_2$. The model is such that $\operatorname{P}(HH)=\theta_1^2$, $\operatorname{P}(HT)=\theta_1\theta_2$ and $\operatorname{P}(T)=\theta_2$. This MM is non-multilinear since the matrix A includes an entry equal to 2.
\end{example}

Since DBNs have been already shown to have a non-multilinear monomial structure in \citet{Brandherm2004}, here the focus is on staged trees, which are introduced next.

\subsection{Staged trees}
Graphical models represented by \textit{event trees} $\mathcal{T}=(V,E)$  are considered here, which are directed rooted trees where each inner vertex $v\in V$ has at least two children. In this context, the sample space of the model corresponds to the set of root-to-leaf paths in the graph and each directed path, which is a sequence of edges $r=(e~ |~ e\in E(r))$, for $E(r)\subset E$ has a meaning in the modelling context. Each edge $e\in E$ is associated to a primitive probability $\theta_e\in(0,1)$ such that on each \textit{floret} $\mathcal{F}(v)=(v,E(v))$, where $E(v)\subseteq E$ is the set of edges emanating from $v\in V$, the primitive probabilities sum to unity. The probability of an atom is then simply the product of the primitive probabilities along the edges of its path: $\operatorname{P}(r)=\prod_{e\in E(r)}\theta_e$.

\begin{definition}
Let $\theta_v=(\theta_e~|~ e\in E(v))$ be the vector of primitive probabilities associated to the floret $\mathcal{F}(v)$, $v\in V$, in an event tree $\mathcal{T}=(V,E)$. A \emph{staged tree} is an event tree as above where, for some $v,w\in V$, the floret probabilities are identified $\theta_v=\theta_w$. Then, $v,w\in V$ are in the same \emph{stage}.
\end{definition}

Two vertices are thus in the same stage if they have the same (conditional) distribution over their edges. When drawing a tree, vertices in the same stage are either framed using the same shape or equally colored in order to have a visual counterpart of that information. Setting floret probabilities equal can be thought of as representing conditional independence information. Staged trees are capable of representing all conditional independence hypotheses within discrete BNs, whilst at the same time being more flexible in expressing modifications of these \citep{Gorgen2015,Smith2008}.

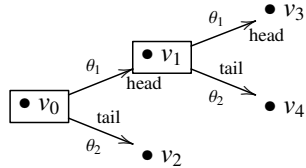
\begin{figure}[t]
\[ \xymatrixrowsep{0.5pc}{{\xymatrix{
&&\bullet~v_3\\
&*+[F]{\bullet~v_1}\ar[ru]^{\theta_1}_<(0.7){\text{head}}\ar[rd]_{\theta_2}^{\text{tail}}\\
*+[F]{\bullet~v_0}\ar[ru]^{\theta_1}_<(0.7){\text{head}}\ar[rd]_{\theta_2}^{\text{tail}}&&\bullet~v_4\\
&\bullet~v_2
}}}\]\vspace*{-10pt}%reducing extra space caused by \[...\]
\caption[]{The staged tree of a repeated coin toss from Example \ref{ex:1}.}\label{fig:treecoin}
\end{figure}

Staged trees are MMs whose atomic probabilities can either be multilinear or not \citep{Gorgen2015}. The following example gives a simple illustration of a non-multilinear staged tree.

\begin{example}
The MM of Example \ref{ex:1} can be depicted as the staged tree in Figure \ref{fig:treecoin}, which has two inner-vertices, $v_0$ and $v_1$, in the same stage. The tree has three root-to-leaf paths ending in the leaves $v_3$ (head and head), $v_4$ (head and tail) and $v_2$ (tail). The edges emanating from the inner-vertices $v_0$ and $v_1$ are associated to the primitive probabilities $\theta_1$ and $\theta_2$ representing the probability of head and tail respectively.
\end{example}

\subsection{An example}
\label{sec:23}
To illustrate the construction of a staged tree and its monomial representation, an example from an educational application is considered. This example was first introduced in \citet{Freeman2011}.

In a one-year program students take components A and B, but not everyone in the same order: students are first allocated to study either module A or B for the first six months and then the other for the final six months. After the first six months students are examined on their allocated component and can be awarded a distinction (D), a pass (P) or a fail (F). If failed, they can resit the exam with the possibility of passing and thus be allowed to the second component. Students who fail the resit are withdrawn from the program. For the second module students can again either fail, pass or be awarded a distinction, but with no possibility of resitting. With an obvious extension of the labeling, the process can be depicted by the tree in Figure \ref{fig:tree2}

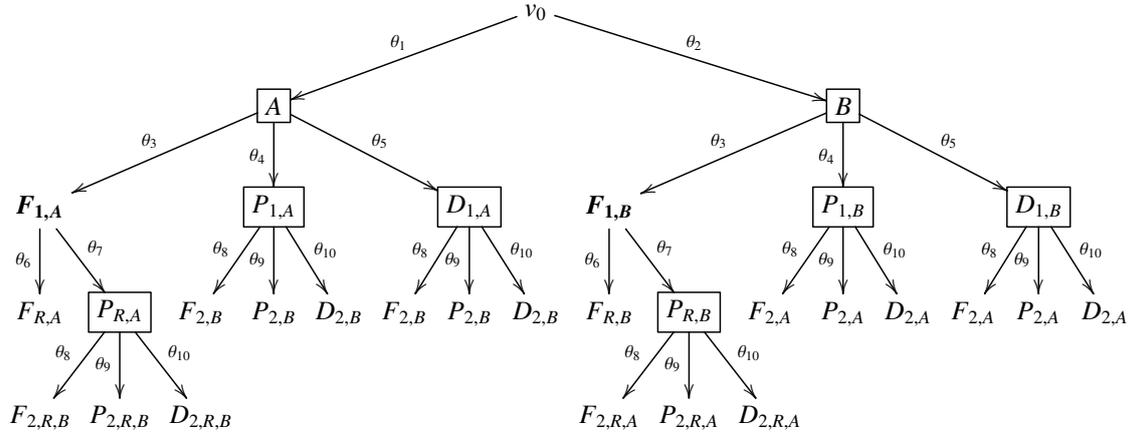
\begin{figure}[t]
\[ \xymatrixcolsep{0.1pc}{{\xymatrix{
&&&&&&&v_0\ar[rrrrd]^{\theta_2}\ar[lllld]_{\theta_1}\\
&&&*+[F]{A}\ar[llld]_{\theta_3}\ar[d]_{\theta_4}\ar[rrrd]^{\theta_5}&&&&&&&&*+[F]{B}\ar[llld]_{\theta_3}\ar[d]_{\theta_4}\ar[rrrd]^{\theta_5}\\
\bm{F_{1,A}}\ar[dr]^{\theta_7}\ar[d]_{\theta_6}&&&*+[F]{P_{1,A}}\ar[dl]_{\theta_8}\ar[d]_{\theta_9}\ar[dr]^{\theta_{10}}&&&*+[F]{D_{1,A}}\ar[dl]_{\theta_8}\ar[d]_{\theta_9}\ar[dr]^{\theta_{10}}&&\bm{F_{1,B}}\ar[d]_{\theta_6}\ar[dr]^{\theta_7}&&&*+[F]{P_{1,B}}\ar[ld]_{\theta_8}\ar[d]_{\theta_9}\ar[rd]^{\theta_{10}}&&&*+[F]{D_{1,B}}\ar[ld]_{\theta_8}\ar[d]_{\theta_9}\ar[rd]^{\theta_{10}}\\
F_{R,A}&*+[F]{P_{R,A}}\ar[ld]_{\theta_8}\ar[d]_{\theta_9}\ar[rd]^{\theta_{10}}&F_{2,B}&P_{2,B}&D_{2,B}&F_{2,B}&P_{2,B}&D_{2,B}&F_{R,B}&*+[F]{P_{R,B}}\ar[ld]_{\theta_8}\ar[d]_{\theta_9}\ar[rd]^{\theta_{10}}&F_{2,A}&P_{2,A}&D_{2,A}&F_{2,A}&P_{2,A}&D_{2,A}\\
F_{2,R,B}&P_{2,R,B}&D_{2,R,B}&&&&&&F_{2,R,A}&P_{2,R,A}&D_{2,R,A}
}}}\]\vspace*{-10pt}%reducing extra space caused by \[...\]
\caption[]{The staged tree of the educational application of Section \ref{sec:23} under the first set of hypotheses.}\label{fig:tree2}
\end{figure}

Various hypotheses of conditional independence, corresponding to equal primitive probabilities of multiple florets, can be embedded in the above educational scenario. One set of such hypotheses was given in \citet{Freeman2011} as:
\begin{itemize}
\item The components A and B are equally hard: this corresponds to an equal framing of the vertices A and B in Figure \ref{fig:tree2}.
\item The chances of passing the first module after a fail do not depend on the module taken: this is depicted by an equal colouring of $F_{1,A}$ and $F_{1,B}$ in Figure \ref{fig:tree2}.
\item The distribution of grades for the last six months does not depend on the module taken nor on the results of the first part: this is depicted by framing $P_{R,A}$, $P_{1,A}$, $D_{1,A}$, $P_{R,B}$, $P_{1,B}$ and $D_{1,B}$ by a rectangle in Figure \ref{fig:tree2}. 
\end{itemize}

These hypotheses give the staged tree of Figure \ref{fig:tree2}, which can be equally represented by a MM with parameter vector $(\theta_1,\dots,\theta_{10})$, matrix $A=(A_{11},A_{12})^{\textnormal{T}}$, with
\[
A_{11}=
\begin{pmatrix}
1&0&1&0&0&1&0&0&0&0\\
1&0&1&0&0&0&1&1&0&0\\
1&0&1&0&0&0&1&0&1&0\\
1&0&1&0&0&0&1&0&0&1\\
1&0&0&1&0&0&0&1&0&0\\
1&0&0&1&0&0&0&0&1&0\\
1&0&0&1&0&0&0&0&0&1\\
1&0&0&0&1&0&0&1&0&0\\
1&0&0&0&1&0&0&0&1&0\\
1&0&0&0&1&0&0&0&0&1
\end{pmatrix}\hspace{1cm}
A_{12}=
\begin{pmatrix}
0&1&1&0&0&1&0&0&0&0\\
0&1&1&0&0&0&1&1&0&0\\
0&1&1&0&0&0&1&0&1&0\\
0&1&1&0&0&0&1&0&0&1\\
0&1&0&1&0&0&0&1&0&0\\
0&1&0&1&0&0&0&0&1&0\\
0&1&0&1&0&0&0&0&0&1\\
0&1&0&0&1&0&0&1&0&0\\
0&1&0&0&1&0&0&0&1&0\\
0&1&0&0&1&0&0&0&0&1
\end{pmatrix}
\]
and partition $S=\{S_1,S_2,S_3,S_4\}$ where $S_1=\{1,2\}$, $S_2=\{3,4,5\}$, $S_{3}=\{6,7\}$ and $S_4=\{8,9,10\}$. This model is multilinear since all entries of $A$ are either zero or one. Graphically this could have also been deduced by noticing that no vertices along a root-to-leaf path are in the same stage. 

\begin{figure}[t]
\[ \xymatrixcolsep{0.1pc}{{\xymatrix{
&&&&&&&v_0\ar[rrrrd]^{\theta_2}\ar[lllld]_{\theta_1}\\
&&&\bm{A}\ar[llld]_{\theta_3}\ar[d]_{\theta_4}\ar[rrrd]^{\theta_5}&&&&&&&&\bm{B}\ar[llld]_{\theta_3}\ar[d]_{\theta_4}\ar[rrrd]^{\theta_5}\\
*+[F]{F_{1,A}}\ar[dr]^{\theta_7}\ar[d]_{\theta_6}&&&\bm{P_{1,A}}\ar[dl]_{\theta_3}\ar[d]_{\theta_4}\ar[dr]^{\theta_5}&&&\bm{D_{1,A}}\ar[dl]_{\theta_3}\ar[d]_{\theta_4}\ar[dr]^{\theta_5}&&*+[F]{F_{1,B}}\ar[d]_{\theta_6}\ar[dr]^{\theta_7}&&&\bm{P_{1,B}}\ar[ld]_{\theta_3}\ar[d]_{\theta_4}\ar[rd]^{\theta_5}&&&\bm{D_{1,B}}\ar[ld]_{\theta_3}\ar[d]_{\theta_4}\ar[rd]^{\theta_5}\\
F_{R,A}&\bm{P_{R,A}}\ar[ld]_{\theta_3}\ar[d]_{\theta_4}\ar[rd]^{\theta_5}&F_{2,B}&P_{2,B}&D_{2,B}&F_{2,B}&P_{2,B}&D_{2,B}&F_{R,B}&\bm{P_{R,B}}\ar[ld]_{\theta_3}\ar[d]_{\theta_4}\ar[rd]^{\theta_5}&F_{2,A}&P_{2,A}&D_{2,A}&F_{2,A}&P_{2,A}&D_{2,A}\\
F_{2,R,B}&P_{2,R,B}&D_{2,R,B}&&&&&&F_{2,R,A}&P_{2,R,A}&D_{2,R,A}
}}}\]\vspace*{-10pt}%reducing extra space caused by \[...\]
\caption[]{The staged tree of the educational application of Section \ref{sec:23} under the second set of hypotheses.}\label{fig:tree3}
\end{figure}
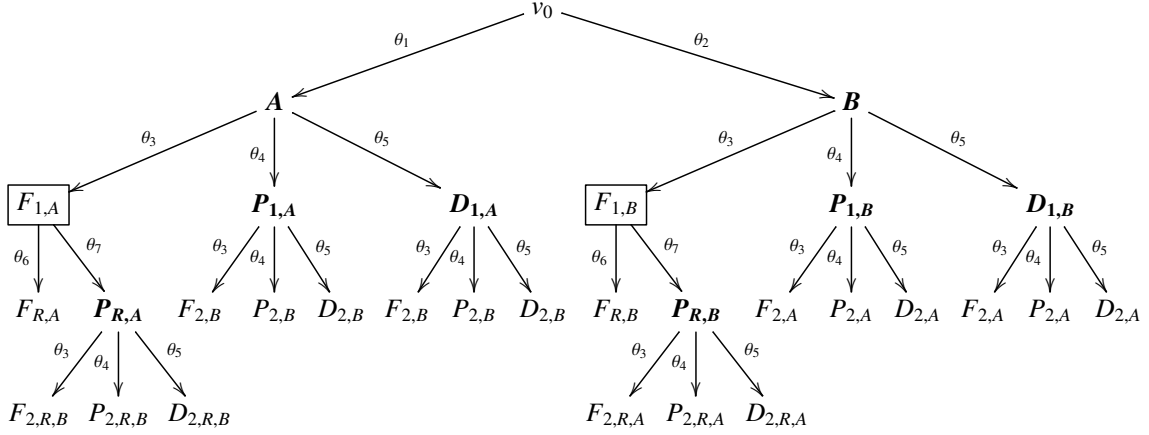

A second set of hypotheses may embellish the first one by assuming that the distribution of grades of students not experiencing fails are the same in all components. This additional hypothesis gives the staged tree in Figure \ref{fig:tree3} where vertices $A$, $B$, $P_{R,A}$, $P_{1,A}$, $D_{1,A}$, $P_{R,B}$, $P_{1,B}$ and $D_{1,B}$ are now all in the same stage. This staged tree can be written as a MM with parameter $(\theta_{1},\dots, \theta_{7})$, matrix $A=(A_{21},A_{22})^{\textnormal{T}}$ with

\[
A_{21}=
\begin{pmatrix}
1&0&1&0&0&1&0\\
1&0&2&0&0&0&1\\
1&0&1&1&0&0&1\\
1&0&1&0&1&0&1\\
1&0&1&1&0&0&0\\
1&0&0&2&0&0&0\\
1&0&0&1&1&0&0\\
1&0&1&0&1&0&0\\
1&0&0&1&1&0&0\\
1&0&0&0&2&0&0
\end{pmatrix}\hspace{1cm}
A_{22}=
\begin{pmatrix}
0&1&1&0&0&1&0\\
0&1&2&0&0&0&1\\
0&1&1&1&0&0&1\\
0&1&1&0&1&0&1\\
0&1&1&1&0&0&0\\
0&1&0&2&0&0&0\\
0&1&0&1&1&0&0\\
0&1&1&0&1&0&0\\
0&1&0&1&1&0&0\\
0&1&0&0&2&0&0
\end{pmatrix}
\]
and partition $S=\{S_1,S_2,S_3\}$ where $S_1=\{1,2\}$, $S_2=\{3,4,5\}$ and $S_3=\{6,7\}$. Under this additional hypothesis the staged tree does not entertain a multilinear monomial parametrization, but only a non-multilinear one. For such models there is currently no established sensitivity theory to investigate their robustness.

\section{Covariation}
\label{sec:cov}
The basic underlying idea of sensitivity analysis is to vary some of the model's parameters and observe how such variations affect outputs of interest. However, when such variations are performed,  then some of the remaining parameters need to be adjusted (or to \emph{covary}) to respect the sum-to-one condition of probability measures. In the binary case when one of the two parameters is varied this is straightforward, since the second parameter will be equal to one minus the other. But in generic discrete finite cases there are multiple ways to covary parameters.

The theory of covariation from \citet{Renooij2014} is reviewed next, with a particular focus on its specific characterization for MMs given in \citet{Leonelli2018}. For a set $S$ and $i\in S$,  let $S^{-i}$ denote $S\setminus \{i\}$, $-S_j$ denote the set $[k]\setminus S_j$ and  let $|v|$ denote the sum of the elements of a vector $v$.

\begin{definition}
\label{def:covar}
Let $\theta$ be the parameter vector of a MM and $\theta_i$ be the parameter varied where $i\in S_j$. Let $\theta$ be 
partitioned as $\theta=(\theta_i,\theta_{S^{-i}_j},\theta_{-S_j})$  and let  $\tilde{\theta}_i\in(0,1)$. 
  A $\tilde\theta_i$-\emph{covariation scheme} is a function $\sigma:\bigtimes\limits_{k\in[n]}\Delta_{\#S_k-1}\rightarrow\bigtimes\limits_{k\in[n]}\Delta_{\#S_k-1}$ which fixes $\theta_i$ to $\tilde\theta_i$ and does not change $\theta_{-S_j}$, i.e.
\begin{align*}
\sigma: \hspace{0.5cm}  \bigtimes\limits_{i\in[n]}\Delta_{\#S_i-1}\longmapsto&\bigtimes\limits_{i\in[n]}\Delta_{\#S_i-1}\\
(\theta_i,\theta_{S^{- i}_j},\theta_{-S_j})\longmapsto &(\tilde\theta_i,\cdot,\theta_{-S_j}).
\end{align*}
\end{definition}

Thus $\theta_{S_j}$ denotes a vector of parameters that need to respect the sum to one condition,  $\tilde\theta_i$ denotes the new numerical specification of the parameter varied and $\theta_{-S_j}$ the parameter vector which is not affected by the variation. Consider as an example a staged tree model. In a staged tree the sets $S_k$, 
 $k \in [n]$, denote the conditional probability distributions of florets in different stages. Suppose one parameter from one stage is varied. Then the parameters associated to that same stage are covaried, whilst all others are held fixed. 

\begin{definition}
\label{def:schemes}
In the notation of Definition~\ref{def:covar}
\begin{itemize}
\item the $\tilde\theta_i$-\textit{proportional} covariation scheme $\sigma_{\operatorname{pro}}(\theta)=(\tilde\theta_i,\tilde\theta_{S^{- i}_j},\theta_{-S_j})$ is defined by setting 
	\[
		\tilde\theta_k=
		\frac{1-\tilde{\theta}_i}{1-\theta_i}\theta_k \qquad  \text{for all } k\in S_j^{- i}. 
	\] 
\item The $\tilde\theta_i$-\textit{uniform} covariation scheme, $\sigma_{\operatorname{uni}}(\theta)=(\tilde\theta_i,\tilde\theta_{S_j^{-i}},\theta_{-S_j})$ is defined by setting
	\[
	\tilde\theta_k=\frac{1-\tilde{\theta}_i}{\#S_j-1}  \qquad  \text{for all } k\in S^{-i}_j. 
	\]
\item The $\tilde\theta_i$-\textit{linear} covariation scheme  $\sigma_{\operatorname{lin}}(\theta)=(\tilde\theta_i,\tilde\theta_{S_j^{-i}},\theta_{-S_j})$ is defined by setting
	\[
	\tilde\theta_k=\gamma_k\tilde\theta_i+\delta_k  \qquad  \text{for all } k\in S^{-i}_j,
	\]
	where $\gamma_k$ and $\delta_k$ need to be chosen so that $\tilde\theta_i+|\tilde\theta_{S_j^{-i}}|=1$ 
	\end{itemize}
\end{definition}

Different covariation schemes may entertain different properties which, depending on the domain of application, might be more or less desirable~\citep[see][for a list]{Leonelli2017,Renooij2014}. Applying a linear covariation scheme is very natural: if for instance $\delta_k=-\gamma_k$, then $\tilde\theta_k=\delta_k(1-\tilde\theta_i)$ and the scheme assigns a proportion $\delta_k$ of the remaining probability mass to $\tilde\theta_k$. Notice that uniform and proportional schemes are specific instances of linear covariations. Another used covariation scheme  is the order-preserving one \citep[see][for details]{Renooij2014}.

\section{Sensitivity functions}
\label{sec:sensi}
Sensitivity functions represent the functional relationship between a parameter being varied and the output probability of an event of interest. These are often used in practice since, for instance, the parameter specifications of a MM may imply event probabilities which appears to be unreasonable to a user, although being a coherent consequence of his/her beliefs. Sensitivity functions depict the required change of a parameter that would give a reasonable event probability.

Consider a $MM(A,\theta,S)$ and an event $E\subset\mathbb{Y}$ of interest. Definition \ref{def:sensi} gives the probability of an event $E$ as a function of a covariation scheme.

\begin{definition}
\label{def:sensi}
Let $\sigma$ be a $\tilde\theta_i$-covariation scheme. For $\operatorname{P}\in MM(A,\theta,S)$, the probability $\sigma(\operatorname{P})(E)$ read as a function of $\tilde\theta_i$ is called sensitivity function associated to $\sigma$.
\end{definition}

The following theorem derives the general form of sensitivity functions in non-multilinear MMs as well as their form for specific covariation schemes. 

\begin{theorem}
\label{theo:1}
 Let $\operatorname{P}\in MM(A,\theta,S)$, $E\subset \mathbb{Y}$ and suppose the parameter $\theta_i$ is varied, where $i\in S_j$. Then
\begin{itemize}
\item for a generic $\theta_i$-covariation scheme $\sigma$
\begin{equation}
\label{eq:theo11}
\sigma(\operatorname{P})(E)= \sum_{y\in E}\tilde\theta_{S_j}^{A_{y,S_{j}}}\theta_{-S_j}^{A_{y,- S_j}}
\end{equation}
\item for proportional covariation $\sigma_{\operatorname{pro}}$
\begin{equation}
\label{eq:theo12}
\sigma_{\operatorname{pro}}(\operatorname{P})(E)=\sum_{y\in E}\tilde\theta_i^{A_{y,i}}\left(\frac{1-\tilde\theta_i}{1-\theta_i}\right)^{|A_{y,S_j^{-i}}|}\theta_{S_j^{-i}}^{A_{y,S_j^{-i}}}\theta_{-S_j}^{A_{y,-S_j}}
\end{equation}
\item for uniform covariation $\sigma_{\operatorname{uni}}$
\begin{equation}
\label{eq:theo13}
\sigma_{\operatorname{uni}}(\operatorname{P})(E)=\sum_{y\in E}\tilde\theta_i^{A_{y,i}}\left(\frac{1-\tilde\theta_i}{\# S_j-1}\right)^{|A_{y,S_j^{-i}}|}\theta_{-S_j}^{A_{y,-S_j}}
\end{equation}
\item for linear covariation $\sigma_{\operatorname{lin}}$
\begin{equation}
\label{eq:theo14}
\sigma_{\operatorname{lin}}(\operatorname{P})(E)=\sum_{y\in E}\tilde\theta_i^{A_{y,i}}\prod_{k\in S_j^{-i}}(\gamma_k\tilde\theta_i+\delta_k)^{A_{y,k}}\theta_{-S_j}^{A_{y,-S_j}}
\end{equation}
\end{itemize}  
\end{theorem}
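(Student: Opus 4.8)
The plan is to reduce the whole statement to the definition of atomic probabilities in a MM together with the defining formulas of the covariation schemes in Definition~\ref{def:schemes}. First I would write $\sigma(\operatorname{P})(E)=\sum_{y\in E}\sigma(\operatorname{P})(y)$ and recall from Definition~\ref{def:MM} that each atomic probability factorises over the blocks of the partition, $\operatorname{P}(y)=\prod_{l\in[n]}\theta_{S_l}^{A_{y,S_l}}$. By Definition~\ref{def:covar} a $\tilde\theta_i$-covariation scheme $\sigma$ is a self-map of the Cartesian product of the simplices $\Delta_{\#S_l-1}$, $l\in[n]$, which fixes $\theta_i$ to $\tilde\theta_i$ and leaves every block $\theta_{S_l}$ with $l\neq j$ unchanged, modifying only the block $\theta_{S_j}$ into some $\tilde\theta_{S_j}\in\Delta_{\#S_j-1}$. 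Hence $\sigma(\operatorname{P})(y)$ is obtained from $\operatorname{P}(y)$ by replacing the single factor $\theta_{S_j}^{A_{y,S_j}}$ with $\tilde\theta_{S_j}^{A_{y,S_j}}$; since the partition gives $[k]\setminus S_j=\bigcup_{l\neq j}S_l$, the product of the untouched factors equals $\theta_{-S_j}^{A_{y,-S_j}}$, which yields \eqref{eq:theo11}.

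The three special cases then follow by substituting the explicit form of $\tilde\theta_{S_j}$ into \eqref{eq:theo11} and collecting exponents, using throughout the elementary identity $|A_{y,C}|=\sum_{k\in C}A_{y,k}$. Splitting $\tilde\theta_{S_j}^{A_{y,S_j}}=\tilde\theta_i^{A_{y,i}}\prod_{k\in S_j^{-i}}\tilde\theta_k^{A_{y,k}}$, for proportional covariation I would insert $\tilde\theta_k=\frac{1-\tilde\theta_i}{1-\theta_i}\theta_k$; factoring the common ratio out of the product raises it to the power $\sum_{k\in S_j^{-i}}A_{y,k}=|A_{y,S_j^{-i}}|$ and leaves the residual $\prod_{k\in S_j^{-i}}\theta_k^{A_{y,k}}=\theta_{S_j^{-i}}^{A_{y,S_j^{-i}}}$, giving \eqref{eq:theo12}. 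Substituting instead $\tilde\theta_k=\frac{1-\tilde\theta_i}{\#S_j-1}$ makes the product over $k\in S_j^{-i}$ collapse to $\left(\frac{1-\tilde\theta_i}{\#S_j-1}\right)^{|A_{y,S_j^{-i}}|}$, producing \eqref{eq:theo13}, while $\tilde\theta_k=\gamma_k\tilde\theta_i+\delta_k$ gives \eqref{eq:theo14} immediately. Strict positivity of $\operatorname{P}$ guarantees $\theta_i\in(0,1)$, so the ratio in the proportional case is well defined, and the constraints on $\gamma_k,\delta_k$ and the count $\#S_j-1$ are precisely those making the respective schemes land in $\Delta_{\#S_j-1}$.

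The computation is essentially routine; the only point that needs care is the bookkeeping of which parameters the variation touches, i.e.\ justifying that the factor $\theta_{-S_j}^{A_{y,-S_j}}$ survives intact while the exponents inside the varied block recombine correctly. I expect this to be the main, and rather mild, obstacle, and it is settled entirely by the structural constraints built into Definition~\ref{def:covar} --- the codomain of $\sigma$ equals its domain, $\theta_i$ is pinned to $\tilde\theta_i$, and every block other than $S_j$ is mapped to itself --- together with the associativity of the monomial product across partition blocks.
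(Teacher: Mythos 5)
Your proposal is correct and follows essentially the same route as the paper: decompose each atomic probability over the partition blocks, observe that a $\tilde\theta_i$-covariation scheme only alters the block $\theta_{S_j}$ so the factor $\theta_{-S_j}^{A_{y,-S_j}}$ is unchanged, and then obtain the three special cases by plugging the explicit forms of Definition~\ref{def:schemes} into equation~(\ref{eq:theo11}) and collecting exponents. Your write-up just spells out the bookkeeping that the paper's proof leaves implicit.
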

\begin{proof}
For equation (\ref{eq:theo11}) notice that 
\[
\sigma(\operatorname{P})(E)=\sum_{y\in E}\tilde\theta^{A_y}=\sum_{y\in E}\tilde\theta_{S_j}^{A_{y,S_j}}\tilde\theta_{-S_j}^{A_{y,-S_j}}=\sum_{y\in E}\tilde\theta_{S_j}^{A_{y,S_j}}\theta_{-S_j}^{A_{y,-S_j}}.
\]
The form of the sensitivity function under different covariation schemes follows from equation (\ref{eq:theo11}) by plugging-in their definition given in Definition \ref{def:covar}. 
\end{proof}

From Theorem \ref{theo:1} is then easy to deduce the polynomial properties of the sensitivity function in general MMs.

\begin{corollary}
\label{cor:1}
For proportional, uniform and linear $\tilde\theta_i$-covariation schemes, the sensitivity function $\sigma(\operatorname{P})(E)$ is a polynomial in $\tilde\theta_i$ of degree $\max_{y\in E}|A_{y,S_j}|$.
\end{corollary}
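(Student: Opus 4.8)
The plan is to read the degree off the closed forms obtained in Theorem~\ref{theo:1}. In each of the three cases the sensitivity function is a finite sum over $y\in E$ of a product of three factors: the pure power $\tilde\theta_i^{A_{y,i}}$ of the varied parameter; a factor built from the covaried parameters $\theta_{S_j^{-i}}$; and the factor $\theta_{-S_j}^{A_{y,-S_j}}$, which is a nonzero constant not involving $\tilde\theta_i$. Hence each sensitivity function is visibly a polynomial in $\tilde\theta_i$, and the only work is to pin down its degree.

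First I would analyse a single summand. For $\sigma_{\operatorname{pro}}$ the covariation factor is $\bigl((1-\tilde\theta_i)/(1-\theta_i)\bigr)^{|A_{y,S_j^{-i}}|}$, a polynomial in $\tilde\theta_i$ of degree $|A_{y,S_j^{-i}}|$ with nonzero leading coefficient (here $\theta_i\in(0,1)$ is used to keep $1-\theta_i\neq0$, and positivity of $\theta$ keeps the remaining constant factor nonzero); for $\sigma_{\operatorname{uni}}$ the factor $\bigl((1-\tilde\theta_i)/(\#S_j-1)\bigr)^{|A_{y,S_j^{-i}}|}$ is of the same degree; and for $\sigma_{\operatorname{lin}}$ the factor $\prod_{k\in S_j^{-i}}(\gamma_k\tilde\theta_i+\delta_k)^{A_{y,k}}$ has degree $\sum_{k\in S_j^{-i}}A_{y,k}=|A_{y,S_j^{-i}}|$, at least once one assumes $\gamma_k\neq0$ whenever $A_{y,k}>0$ --- which is automatic in the proportional and uniform specialisations, where $\gamma_k$ equals $-\theta_k/(1-\theta_i)$ and $-1/(\#S_j-1)$ respectively. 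Multiplying by $\tilde\theta_i^{A_{y,i}}$ and by the constant factor raises the degree by exactly $A_{y,i}$ and keeps the leading coefficient nonzero, so each summand is a polynomial in $\tilde\theta_i$ of degree $A_{y,i}+|A_{y,S_j^{-i}}|=|A_{y,S_j}|$, using that $S_j$ is the disjoint union of $\{i\}$ and $S_j^{-i}$.

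Summing over $y\in E$ then produces a polynomial of degree at most $d=\max_{y\in E}|A_{y,S_j}|$. The remaining --- and essentially only nontrivial --- point is that this bound is attained, that is, that the degree-$d$ contributions of the summands with $|A_{y,S_j}|=d$ do not cancel. I would handle this by writing those leading coefficients out explicitly (for $\sigma_{\operatorname{pro}}$ the $y$-th one is $(-1)^{|A_{y,S_j^{-i}}|}(1-\theta_i)^{-|A_{y,S_j^{-i}}|}\theta_{S_j^{-i}}^{A_{y,S_j^{-i}}}\theta_{-S_j}^{A_{y,-S_j}}$, and analogously for the other two schemes), noting that each is nonzero, so that whenever a single atom attains the maximum the claim is immediate, whereas in general the vanishing of this sum of rational expressions in $\theta$ is a proper algebraic, hence non-generic, condition on the parameter vector. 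I expect this non-cancellation step to be the main obstacle; everything else is bookkeeping with the integer exponents of $A$.
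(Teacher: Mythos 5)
Your proposal is correct and follows essentially the same route as the paper, which simply reads the degree off the closed forms in Theorem~\ref{theo:1} (the paper's ``proof'' is a one-line remark to that effect). You are in fact more careful than the paper: the possible vanishing of the sum of leading coefficients when several atoms attain $\max_{y\in E}|A_{y,S_j}|$, and the need for $\gamma_k\neq 0$ in the linear scheme, are genuine subtleties the paper silently ignores, so the exact-degree claim should really be read as holding for generic $\theta$ (or as an upper bound), exactly as your non-cancellation discussion indicates.
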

This follows from the form of the sensitivity functions given in equation (\ref{eq:theo12})-(\ref{eq:theo14}).

Notice that differently to multilinear MMs, where the sensitivity function is linear for any linear covariation scheme, the sensitivity function is more generally polynomial in non-multilinear MMs. However, there are cases where sensitivity functions are simply linear, as formalized by the following corollary.

\begin{corollary}
\label{cor:2}
In the notation of Theorem \ref{theo:1}, if $0\leq |A_{y,S_j}|\leq 1$ for all $y\in E$, then $\sigma(\operatorname{P})(E)$ is a linear function of $\tilde\theta_i$ for any linear $\tilde\theta_i$-covariation scheme.
\end{corollary}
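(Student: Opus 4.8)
The plan is to read off the claim directly from the linear covariation formula in Theorem~\ref{theo:1}, equation~(\ref{eq:theo14}), by observing that the hypothesis $0\le|A_{y,S_j}|\le 1$ severely constrains the exponent pattern of each summand. First I would fix an arbitrary atom $y\in E$ and analyse the term $\tilde\theta_i^{A_{y,i}}\prod_{k\in S_j^{-i}}(\gamma_k\tilde\theta_i+\delta_k)^{A_{y,k}}$. Since all entries of $A$ are non-negative integers and $|A_{y,S_j}|=A_{y,i}+\sum_{k\in S_j^{-i}}A_{y,k}$, the assumption forces this sum of non-negative integers to be $0$ or $1$; hence at most one of the integers $A_{y,i}$ and $\{A_{y,k}: k\in S_j^{-i}\}$ equals $1$ and all the rest equal $0$.

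Next I would split into the two resulting cases. If $|A_{y,S_j}|=0$, every exponent in the $S_j$-block vanishes, so the $y$-summand reduces to the constant $\theta_{-S_j}^{A_{y,-S_j}}$, which is trivially linear (degree $0$) in $\tilde\theta_i$. If $|A_{y,S_j}|=1$, then either $A_{y,i}=1$ and all $A_{y,k}=0$ for $k\in S_j^{-i}$, giving the summand $\tilde\theta_i\,\theta_{-S_j}^{A_{y,-S_j}}$, which is linear in $\tilde\theta_i$; or $A_{y,i}=0$ and exactly one $k^\star\in S_j^{-i}$ has $A_{y,k^\star}=1$ with all other $S_j$-exponents zero, giving the summand $(\gamma_{k^\star}\tilde\theta_i+\delta_{k^\star})\,\theta_{-S_j}^{A_{y,-S_j}}$, again linear (indeed affine) in $\tilde\theta_i$. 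In every case the $y$-summand is an affine function of $\tilde\theta_i$, and since $\sigma_{\operatorname{lin}}(\operatorname{P})(E)$ is the finite sum over $y\in E$ of such summands, it is itself affine, i.e. a linear function of $\tilde\theta_i$ in the sense used throughout the paper (this also follows from Corollary~\ref{cor:1}, since $\max_{y\in E}|A_{y,S_j}|\le 1$).

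I do not anticipate a real obstacle here: the argument is a direct case split driven by the fact that a sum of non-negative integers bounded by $1$ has at most one nonzero term, so each factor $(\gamma_k\tilde\theta_i+\delta_k)^{A_{y,k}}$ is either $1$ or itself, and no products of two non-constant factors ever arise. The only point worth stating carefully is that ``linear'' here means affine (degree at most one), which is the convention already in force for multilinear MMs; once that is noted, the closure of affine functions under finite sums finishes the proof.
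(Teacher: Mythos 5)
Your proposal is correct and follows essentially the same route as the paper, which simply notes that the hypothesis $0\leq |A_{y,S_j}|\leq 1$ forces the polynomial of Corollary~\ref{cor:1} (equivalently, each summand of equation~(\ref{eq:theo14})) to have degree at most one; your case split on which single exponent in the $S_j$-block can equal $1$ is just a more explicit unpacking of that observation, and your closing remark that the claim also follows from Corollary~\ref{cor:1} is exactly the paper's argument.
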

This follows from Corollary \ref{cor:1} since if $0\leq |A_{y,S_j}|\leq 1$ then the sensitivity function is a polynomial of degree 1.

The previous results formalize the form of sensitivity functions for marginal probabilities. Conditional sensitivity functions represent the functional relationship between conditional probabilities and a parameter varied.

\begin{corollary}
The conditional sensitivity function $\sigma(P)(E~|~C)$ is the ratio of sensitivity functions $\sigma(P)(E\cap C)/\sigma(P)(C)$, where each of these have the properties formalized in Theorem \ref{theo:1}, Corollary \ref{cor:1} and Corollary \ref{cor:2}.
\end{corollary}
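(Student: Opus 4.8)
The plan is to observe that this corollary is essentially a definitional unfolding combined with the results already established in Theorem~\ref{theo:1} and Corollaries~\ref{cor:1}--\ref{cor:2}. First I would recall that for a strictly positive probability $\operatorname{P}$ and events $E,C\subset\mathbb{Y}$ with $\operatorname{P}(C)>0$, the conditional probability is defined as $\operatorname{P}(E\mid C)=\operatorname{P}(E\cap C)/\operatorname{P}(C)$. Applying the covaried measure $\sigma(\operatorname{P})$ in place of $\operatorname{P}$, which is again a strictly positive probability in $MM(A,\tilde\theta,S)$ whenever $\tilde\theta_i\in(0,1)$ and the covaried parameters stay positive, one immediately gets $\sigma(\operatorname{P})(E\mid C)=\sigma(\operatorname{P})(E\cap C)/\sigma(\operatorname{P})(C)$. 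Reading this as a function of $\tilde\theta_i$ gives the claimed ratio of sensitivity functions, since both numerator and denominator are sensitivity functions in the sense of Definition~\ref{def:sensi} applied to the events $E\cap C$ and $C$ respectively.

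Next I would invoke Theorem~\ref{theo:1} separately for the numerator and the denominator: equation~(\ref{eq:theo11}) gives the generic form $\sum_{y\in E\cap C}\tilde\theta_{S_j}^{A_{y,S_j}}\theta_{-S_j}^{A_{y,-S_j}}$ and $\sum_{y\in C}\tilde\theta_{S_j}^{A_{y,S_j}}\theta_{-S_j}^{A_{y,-S_j}}$, and equations~(\ref{eq:theo12})--(\ref{eq:theo14}) give the explicit forms under proportional, uniform and linear covariation. Likewise Corollary~\ref{cor:1} applies to each of the two terms, so under proportional, uniform or linear covariation the numerator is a polynomial in $\tilde\theta_i$ of degree $\max_{y\in E\cap C}|A_{y,S_j}|$ and the denominator a polynomial of degree $\max_{y\in C}|A_{y,S_j}|$; and Corollary~\ref{cor:2} applies when the relevant exponent sums are bounded by one, making the corresponding term linear. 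Hence $\sigma(\operatorname{P})(E\mid C)$ is a ratio of two such polynomials, which is the content of the corollary.

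The only genuinely non-trivial point — and the one I would be careful to state explicitly — is that the denominator $\sigma(\operatorname{P})(C)$ is nonzero for $\tilde\theta_i\in(0,1)$, so the ratio is well defined; this follows because $\sigma(\operatorname{P})$ is strictly positive by construction of any admissible covariation scheme, so $\sigma(\operatorname{P})(C)=\sum_{y\in C}\sigma(\operatorname{P})(y)>0$. Beyond this, there is no computation to grind through: the statement is a transparent consequence of the definition of conditional probability together with the already-proven structural results for marginal sensitivity functions, and I would present it in two or three lines accordingly.
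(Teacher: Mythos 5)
Your argument is correct and follows essentially the same route as the paper, which simply notes that the result follows from the definition of conditional probability applied to $\sigma(\operatorname{P})$, with Theorem \ref{theo:1} and Corollaries \ref{cor:1}--\ref{cor:2} applied separately to numerator and denominator. Your added remark that $\sigma(\operatorname{P})(C)>0$ (so the ratio is well defined) is a harmless refinement the paper leaves implicit.
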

This result easily follows from the definition of conditional probability.
\begin{table}
\begin{center}
{\renewcommand{\arraystretch}{1.5}
\begin{tabular}{|c|}
\hline
Multilinear staged tree \\
\hline
$\theta_1=0.5$, $\theta_2=0.5$, $\theta_3=0.2$, $\theta_4=0.7$, $\theta_5=0.1$, $\theta_6=0.35$, $\theta_7=0.65$, $\theta_8=0.1$, $\theta_9=0.5$, $\theta_{10}=0.4$\\
\hline
\hline
Non-multilinear staged tree\\
\hline
$\theta_1=0.5$, $\theta_2=0.5$, $\theta_3=0.15$, $\theta_4=0.6$, $\theta_5=0.25$, $\theta_6=0.35$, $\theta_7=0.65$\\
\hline
\end{tabular}}
\end{center}
\caption{Probability specifications for the staged trees in Section \ref{sec:23}.\label{table:1}}
\end{table}

\begin{example}
To illustrate the different form of sensitivity functions in multilinear and non-multilinear models, consider the staged trees from the educational example of Section \ref{sec:23}. The two staged tree structures are embellished by the probability specifications given in Table \ref{table:1}. For ease of comparison the probability distributions from the stages $\{v_0\}$ and $\{F_{1,A}, F_{1,B}\}$ are equally defined in the two trees. The  distribution of the stage $\{A,B,P_{1,A},D_{1,A},P_{1,B},D_{1,B},P_{R,A}, P_{R,B}\}$ in the non-multilinear staged tree of Figure \ref{fig:tree3} is such that the parameters $\theta_3$, $\theta_4$ and $\theta_5$ are chosen from the probabilities underlying the tree in Figure \ref{fig:tree2} as $(\theta_3+\theta_8)/2$, $(\theta_4+\theta_9)/2$  and $(\theta_5+\theta_{10})/2$, respectively. Suppose the parameter $\theta_4$ is varied in both cases: notice that for the first tree this is the probability of passing the exam in the second semester, whilst for the three in Figure \ref{fig:tree3} this is the probability of passing an exam at any point.

 The probabilities of four events are considered here. First, the sensitivity function for a $\theta_4$ variation   of not being admitted to the second semester is for both trees $\theta_1\theta_3\tilde\theta_6+\theta_2\theta_3\tilde\theta_6$, where $\tilde\theta_6$ depends on the covariation scheme used. Thus in both models this function is simply linear whenever the covariation scheme is linear, even though the second tree is a non-multilinear model. These sensitivity functions are reported in Figure \ref{fig:sensi1}. Under uniform covariation, the sensitivity function is the same for the two trees, whilst under proportional covariation they differ.
 
 The second event considered is failing the exam in the second semester. For the multilinear tree the associated sensitivity function can be written as $
 \theta_8(\theta_1+\theta_2)(\tilde\theta_3\theta_7+\tilde\theta_4+\tilde\theta_5)
$,
whilst for the non-multilinear tree this is $(\theta_1+\theta_2)(\tilde\theta_3^2\theta_7+\tilde\theta_4\tilde\theta_3+\tilde\theta_5\tilde\theta_3)$. Thus in this case the sensitivity function is a non-linear function of the varied parameter, as reported in Figure \ref{fig:sensi2}, but for both trees the sensitivity function is decreasing.

For the event of passing both exams with distinction the sensitivity functions for the two trees are highly different, as reported in Figure \ref{fig:sensi3}. For the multilinear tree, the sensitivity function is slightly increasing and almost identical for uniform and proportional covariation. Conversely, for the non-multilinear tree this is decreasing non-linearly. Formally, for the multilinear tree the sensitivity function is $(\theta_1+\theta_2)\tilde\theta_5\theta_{10}$, whilst for the non-multilinear tree this is $(\theta_1+\theta_2)\tilde\theta_5^2$.

Lastly, the conditional probability of obtaining a distinction in the first semester given that a distinction was given in the second one is computed. In this case, the sensitivity function is a ratio of polynomials and as such is not linear even for multilinear models. This is shown in Figure \ref{fig:sensi4}. As for the first event considered, the sensitivity functions under uniform covariation are equal for the two trees.
\end{example}

\begin{figure}
\begin{center}
\begin{subfigure}[b]{0.5\linewidth}
    \centering\includegraphics[scale=0.4]{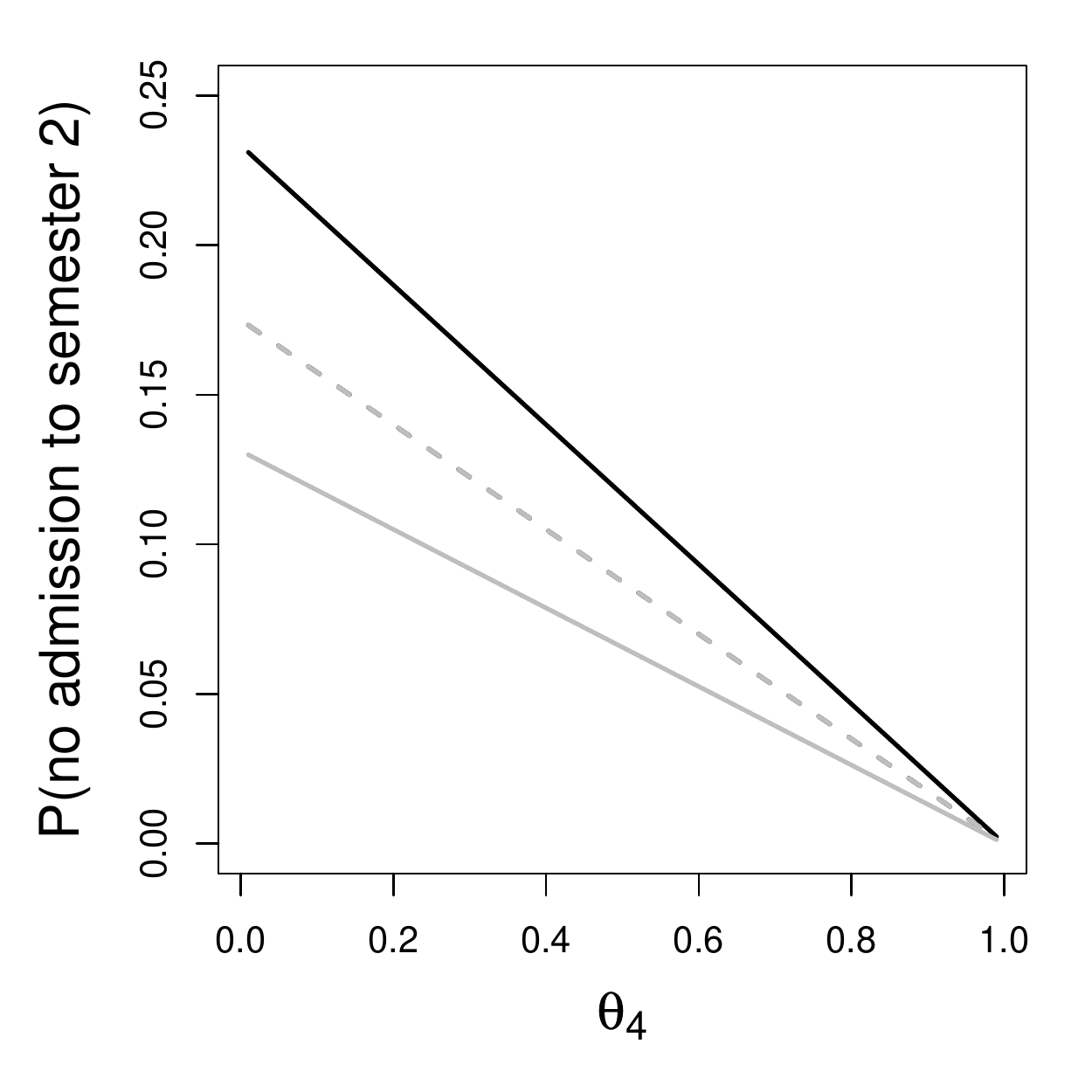}
    \caption{\label{fig:sensi1}}
  \end{subfigure}%
  \begin{subfigure}[b]{0.5\linewidth}
    \centering\includegraphics[scale=0.4]{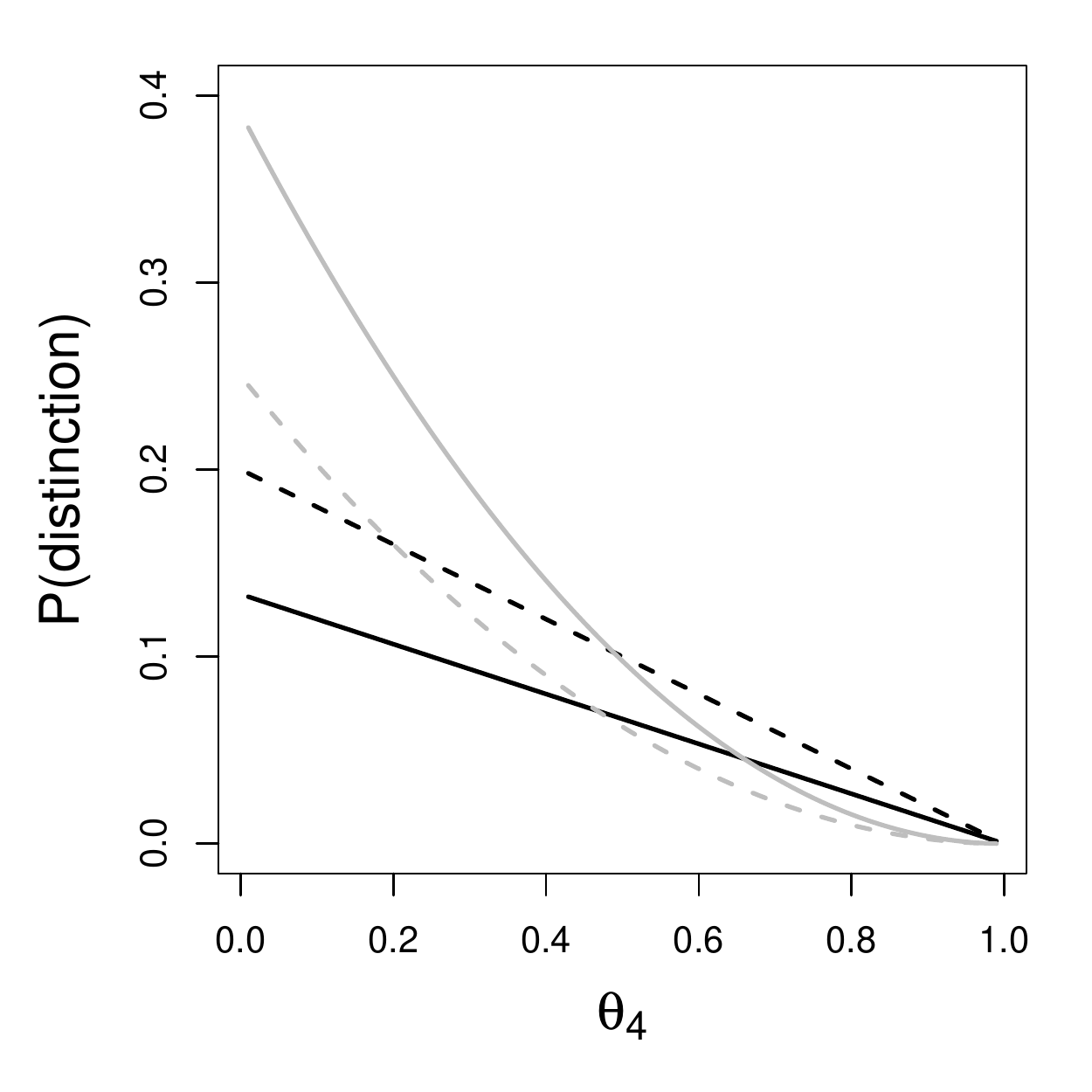}
    \caption{\label{fig:sensi2}}
  \end{subfigure}%
  \\
  \begin{subfigure}[b]{0.5\linewidth}
    \centering\includegraphics[scale=0.4]{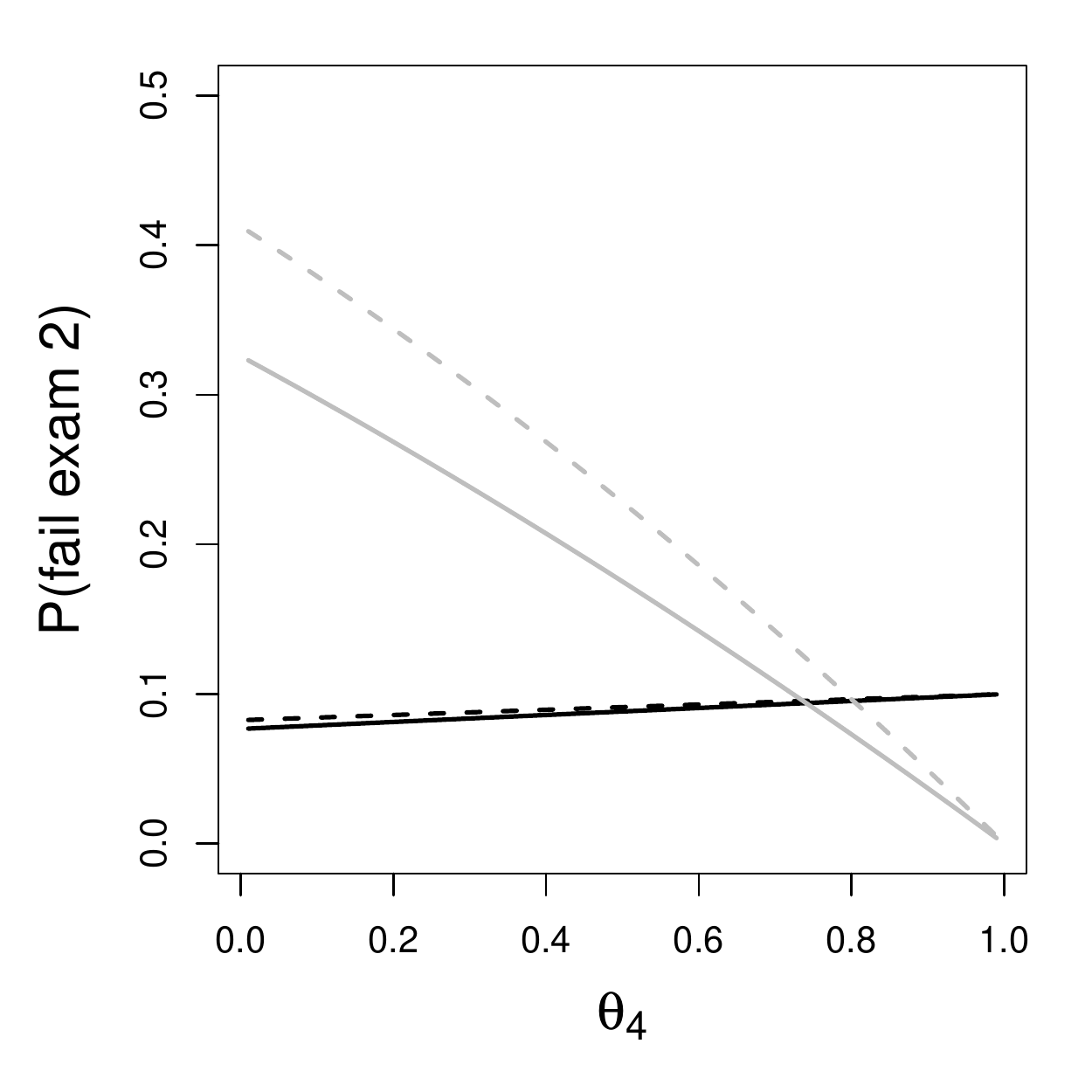}
    \caption{\label{fig:sensi3}}
  \end{subfigure}%
  \begin{subfigure}[b]{0.5\linewidth}
    \centering\includegraphics[scale=0.4]{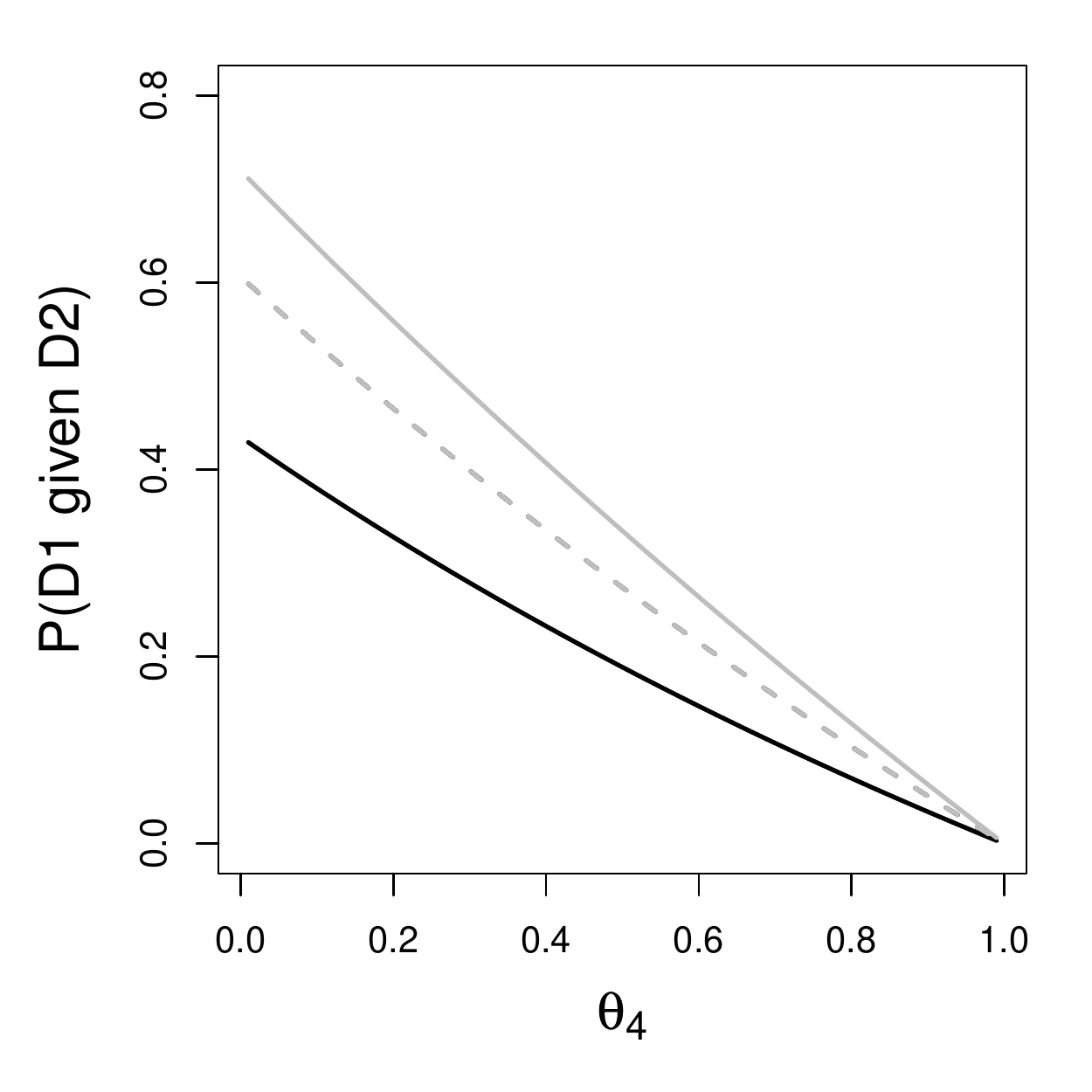}
    \caption{\label{fig:sensi4}}
  \end{subfigure}%
\end{center}
\caption{Sensitivity functions of four events for the staged trees of Section \ref{sec:23}. Black lines: multilinear staged tree; Gray lines: non-multilinear staged tree; Full lines: proportional covariation; Dashed lines: uniform covariation. \label{fig:sensi}}
\end{figure}

\section{Divergence quantification}
\label{sec:div}

Once viable parameter variations have been identified, via the study of sensitivity functions as illustrated in Section \ref{sec:sensi}, the overall effect that these would have on the model's distribution is studied. This is carried out by computing various distances and divergences between the original and the varied distributions. 

\subsection{The CD distance in non-multilinear models}
The measure of dissimilarity which is most commonly used  in sensitivity analysis in graphical models is the so-called CD distance \citep{Chan2005}.

\begin{definition}
The CD distance between two probability distributions $\tilde{\operatorname{P}}$ and $\operatorname{P}$ over a discrete sample space $\mathbb{Y}$ is 
\[
\mathcal{D}_{\operatorname{CD}}(\tilde{\operatorname{P}},\operatorname{P})=\log\max_{y\in\mathbb{Y}}\frac{\tilde{\operatorname{P}}(y)}{\operatorname{P}(y)}-\log\min_{y\in\mathbb{Y}}\frac{\tilde{\operatorname{P}}(y)}{\operatorname{P}(y)}.
\]
\end{definition}

For single and specific multi-way parameter variations, proportional covariation minimizes the CD distance in BN models, as well as in any multilinear MM \citep{Chan2002,Leonelli2017}. However, in non-multilinear models even for single parameter variations proportional covariation does not minimize the CD distance in general as shown by the following example.

\begin{example}
\label{ex:3}
Consider two random variables $Y_1$ and $Y_2$ and suppose $\mathbb{Y}_1=\mathbb{Y}_{2}=[3]$. Suppose also \[
\theta_{i}=\operatorname{P}(Y_1=i)=\operatorname{P}(Y_2=i\,|\,Y_1=j), \,\,\,\,\, i\in[3],j\in[2].
\] 
and 
$
\theta_{i+3}=\operatorname{P}(Y_2=i\,|\, Y_1=3).$ 
The atomic probabilities of this model are clearly non-multilinear. Suppose $\theta_{i}$ is varied and $\theta_{2}$ and $\theta_{3}$ are covaried. Suppose $\theta_{1}=0.33$, $\theta_{2}=0.33$, $\theta_{3}=0.34$ and let $\theta_{1}$ be varied to $0.4$ (the value of $\theta_4$, $\theta_5$ and $\theta_6$ does not affect the CD distance). In this situation the CD distance under a proportional scheme is $2.52$, whilst under a uniform scheme the distance it equals $2.50$. For this parameter variation, the uniform scheme would then be preferred to a proportional one if a user wishes to minimize the CD distance. Conversely, if $\theta_{1}$ is set to $0.2$ the distance is smaller under the proportional scheme $(2.89)$ than under the uniform one $(2.92)$.
\end{example}

Next the form of the CD distance in MMs is derived in general and for specific covariation schemes. For all $\emptyset\neq H\subset [k]$ define $\mathbb{Y}_H^{=}=\{y\in\mathbb{Y}:A_{y,i}=0 \mbox{ for all } i\in H\}$ and let $\mathbb{Y}_H^{\neq}=\mathbb{Y}\setminus \mathbb{Y}_{H}^=$. The set $\mathbb{Y}_H^{\neq}$ includes the events for which at least one parameter with index in $H$ has a non-zero exponent.

\begin{theorem}
\label{theo:2}
 Let $\operatorname{P}\in MM(A,\theta,S)$ and suppose the parameter $\theta_i$ is varied, where $i\in S_j$. Then
 \begin{itemize}
 \item for a generic $\theta_i$-covariation scheme $\sigma$
 \begin{equation}
 \label{eq:theo21}
 \mathcal{D}_{\operatorname{CD}}(\sigma(\operatorname{P}),\operatorname{P})= \log\max_{y\in\mathbb{Y}_{S_j}^{\neq}}\left(\frac{\tilde\theta_{S_j}}{\theta_{S_j}}\right)^{A_{y,S_j}} - \log\min_{y\in\mathbb{Y}_{S_j}^{\neq}}\left(\frac{\tilde\theta_{S_j}}{\theta_{S_j}}\right)^{A_{y,S_j}}
 \end{equation}
 \item for proportional covariation $\sigma_{\operatorname{pro}}$
 \[
 \mathcal{D}_{\operatorname{CD}}(\sigma_{\operatorname{pro}}(\operatorname{P}),\operatorname{P})= \log\max_{y\in\mathbb{Y}_{S_j}^{\neq}}\left(\frac{\tilde\theta_{i}}{\theta_{i}}\right)^{A_{y,i}}\left(\frac{1-\tilde\theta_i}{1-\theta_i}\right)^{|A_{y,S_j^{-i}}|} - \log\min_{y\in\mathbb{Y}_{S_j}^{\neq}}\left(\frac{\tilde\theta_{i}}{\theta_{i}}\right)^{A_{y,i}}\left(\frac{1-\tilde\theta_i}{1-\theta_i}\right)^{|A_{y,S_j^{-i}}|}
 \]
\item for uniform covariation $\sigma_{\operatorname{uni}}$
\[
 \mathcal{D}_{\operatorname{CD}}(\sigma_{\operatorname{uni}}(\operatorname{P}),\operatorname{P})= \log\max_{y\in\mathbb{Y}_{S_j}^{\neq}} \frac{\theta_i^{A_{y,i}}\left(\frac{1-\tilde\theta_i}{\# S_j -1}\right)^{|A_{y,S_j^{-1}}|}}{\theta_{S_j}^{A_{y,S_j}}}- \log\min_{y\in\mathbb{Y}_{S_j}^{\neq}}\frac{\theta_i^{A_{y,i}}\left(\frac{1-\tilde\theta_i}{\# S_j -1}\right)^{|A_{y,S_j^{-1}}|}}{\theta_{S_j}^{A_{y,S_j}}}
\]
\item for linear covariation $\sigma_{\operatorname{lin}}$
\[
 \mathcal{D}_{\operatorname{CD}}(\sigma_{\operatorname{lin}}(\operatorname{P}),\operatorname{P})= \log\max_{y\in\mathbb{Y}_{S_j}^{\neq}} \left(\frac{\tilde\theta_i}{\theta_i}\right)^{A_{y,i}}\prod_{k\in S_j}\left(\frac{\gamma_k\tilde\theta_i+\delta_k}{\theta_k}\right)^{A_{y,k}}- \log\min_{y\in\mathbb{Y}_{S_j}^{\neq}}\left(\frac{\tilde\theta_i}{\theta_i}\right)^{A_{y,i}}\prod_{k\in S_j}\left(\frac{\gamma_k\tilde\theta_i+\delta_k}{\theta_k}\right)^{A_{y,k}}
\]
 \end{itemize}
\end{theorem}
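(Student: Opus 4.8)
The plan is to derive all four cases directly from the definition of the CD distance together with the closed-form expressions for atomic probabilities that the monomial structure provides, in exactly the same spirit as the proof of Theorem~\ref{theo:1}. The starting observation is that for any $\theta_i$-covariation scheme $\sigma$, writing $\tilde{\operatorname{P}}=\sigma(\operatorname{P})$, the ratio of a varied atomic probability to the original one factorizes according to the partition block $S_j$: since only the parameters indexed by $S_j$ change,
\[
\frac{\tilde{\operatorname{P}}(y)}{\operatorname{P}(y)}=\frac{\tilde\theta^{A_y}}{\theta^{A_y}}=\frac{\tilde\theta_{S_j}^{A_{y,S_j}}\,\theta_{-S_j}^{A_{y,-S_j}}}{\theta_{S_j}^{A_{y,S_j}}\,\theta_{-S_j}^{A_{y,-S_j}}}=\left(\frac{\tilde\theta_{S_j}}{\theta_{S_j}}\right)^{A_{y,S_j}},
\]
where the last expression is shorthand for $\prod_{\ell\in S_j}(\tilde\theta_\ell/\theta_\ell)^{A_{y,\ell}}$. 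This already shows that the ratio depends on $y$ only through the exponent vector $A_{y,S_j}$.

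The first key step is then to argue that the outer maximization and minimization over $\mathbb{Y}$ in the definition of $\mathcal{D}_{\operatorname{CD}}$ may be restricted to $\mathbb{Y}_{S_j}^{\neq}$. For any $y\in\mathbb{Y}_{S_j}^{=}$ we have $A_{y,\ell}=0$ for every $\ell\in S_j$, hence $\tilde{\operatorname{P}}(y)/\operatorname{P}(y)=1$ and $\log(\tilde{\operatorname{P}}(y)/\operatorname{P}(y))=0$. Since $\mathbb{Y}_{S_j}^{\neq}$ is nonempty whenever the varied parameter genuinely appears in the model (and otherwise the distance is trivially zero), and since $\max$ over $\mathbb{Y}$ of a quantity that includes the value $0$ and $\min$ over $\mathbb{Y}$ likewise, one checks that $\max_{y\in\mathbb{Y}}\log(\tilde{\operatorname{P}}(y)/\operatorname{P}(y))=\max_{y\in\mathbb{Y}_{S_j}^{\neq}}\log(\tilde{\operatorname{P}}(y)/\operatorname{P}(y))$ \emph{provided} the restricted maximum is $\geq 0$, and symmetrically for the minimum; the small subtlety is that a priori the restricted max could be negative or the restricted min positive. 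This is resolved by noting that the proportional, uniform and linear schemes all keep $\sum_{\ell\in S_j}\tilde\theta_\ell=1$, so $\tilde\theta_{S_j}$ and $\theta_{S_j}$ are two distinct points of the simplex $\Delta_{\#S_j-1}$; consequently there is at least one $\ell\in S_j$ with $\tilde\theta_\ell>\theta_\ell$ and at least one $\ell'$ with $\tilde\theta_{\ell'}<\theta_{\ell'}$, and the corresponding atoms (which exist in $\mathbb{Y}_{S_j}^{\neq}$ because every parameter labels some edge, hence appears with a nonzero exponent in some atom) give ratios $>1$ and $<1$ respectively. This forces the restricted max to be $\geq 0$ and the restricted min $\leq 0$, so dropping $\mathbb{Y}_{S_j}^{=}$ changes neither. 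Substituting the factorized ratio above then yields equation~(\ref{eq:theo21}), which is the generic case.

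The remaining three displays follow by substituting, into equation~(\ref{eq:theo21}), the explicit forms of $\tilde\theta_{S_j}$ dictated by each scheme in Definition~\ref{def:schemes}: for $\sigma_{\operatorname{pro}}$ one splits the block product as $(\tilde\theta_i/\theta_i)^{A_{y,i}}\prod_{k\in S_j^{-i}}(\tilde\theta_k/\theta_k)^{A_{y,k}}$ and plugs in $\tilde\theta_k/\theta_k=(1-\tilde\theta_i)/(1-\theta_i)$, collecting the $|A_{y,S_j^{-i}}|=\sum_{k\in S_j^{-i}}A_{y,k}$ copies; for $\sigma_{\operatorname{uni}}$ one plugs in $\tilde\theta_k=(1-\tilde\theta_i)/(\#S_j-1)$ and keeps the $\theta_{S_j}^{A_{y,S_j}}$ in the denominator (matching the stated form, which does not cancel the $\theta_i$ factor); for $\sigma_{\operatorname{lin}}$ one plugs in $\tilde\theta_k=\gamma_k\tilde\theta_i+\delta_k$, extending the product over all of $S_j$ with the convention $\gamma_i=1,\delta_i=0$ so that the $k=i$ term reproduces $(\tilde\theta_i/\theta_i)^{A_{y,i}}$ — this is exactly how the statement is written. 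Each of these is a routine algebraic rearrangement with no further content.

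The main obstacle, and the only place requiring genuine care rather than bookkeeping, is the reduction of the domain from $\mathbb{Y}$ to $\mathbb{Y}_{S_j}^{\neq}$: one must verify that the extremal ratios over the restricted set straddle $1$ (equivalently, that their logs straddle $0$), for otherwise the atoms with unit ratio in $\mathbb{Y}_{S_j}^{=}$ would alter the max or the min and the clean formula would fail. The argument sketched above — that $\tilde\theta_{S_j}$ and $\theta_{S_j}$ being distinct simplex points forces both an increased and a decreased coordinate, each realized by some atom — handles this for all covariation schemes considered, and it is worth stating explicitly since it is precisely the point where the sum-to-one constraint enters.
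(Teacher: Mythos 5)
Your overall route is the same as the paper's: factorize $\tilde{\operatorname{P}}(y)/\operatorname{P}(y)$ as $(\tilde\theta_{S_j}/\theta_{S_j})^{A_{y,S_j}}$ because only the block $S_j$ is touched, drop the atoms of $\mathbb{Y}_{S_j}^{=}$ (where the ratio is $1$) after checking that the extremal ratios on $\mathbb{Y}_{S_j}^{\neq}$ straddle $1$, and then substitute the schemes of Definition~\ref{def:schemes}. The paper's proof does exactly this, asserting the straddling in a single line, so you are right to single that step out as the only piece of real content.

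However, your justification of the straddling does not hold up as stated. From $\tilde\theta_{S_j}\neq\theta_{S_j}$, both lying in $\Delta_{\#S_j-1}$, you correctly obtain a coordinate $\ell$ with $\tilde\theta_\ell>\theta_\ell$ and a coordinate $\ell'$ with $\tilde\theta_{\ell'}<\theta_{\ell'}$, but it does not follow that an atom whose monomial involves $\theta_\ell$ has ratio greater than $1$: in a non-multilinear model (the very setting of the theorem) such an atom may simultaneously carry positive exponents on decreased coordinates of the same block --- for instance the atom with probability $\theta_1\theta_2$ in Example~\ref{ex:1}, whose ratio after increasing $\theta_1$ and decreasing $\theta_2$ can fall on either side of $1$. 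The straddling should instead be argued by mass conservation: $\sigma(\operatorname{P})$ and $\operatorname{P}$ are both probability distributions and coincide on $\mathbb{Y}_{S_j}^{=}$, hence they assign the same total mass to $\mathbb{Y}_{S_j}^{\neq}$, so the ratios on $\mathbb{Y}_{S_j}^{\neq}$ cannot all exceed $1$ nor all be below $1$; therefore the maximum ratio on $\mathbb{Y}_{S_j}^{\neq}$ is at least $1$ and the minimum at most $1$, and discarding $\mathbb{Y}_{S_j}^{=}$ is harmless. This argument also covers the generic case of equation~(\ref{eq:theo21}) directly, since any covariation scheme of Definition~\ref{def:covar} maps into the product $\Delta_{\#S_1-1}\times\cdots\times\Delta_{\#S_n-1}$ and so preserves the sum-to-one constraint; you did not need to restrict that part of the reasoning to the proportional, uniform and linear schemes. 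With this repair, the remainder of your proof (the plug-in computations for the three named schemes) is correct and matches the paper.
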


\begin{proof}
For equation (\ref{eq:theo11}) notice that 
\begin{eqnarray*}
 \mathcal{D}_{\operatorname{CD}}(\sigma(\operatorname{P}),\operatorname{P}) &=& \log\max_{y\in\mathbb{Y}}\left(\frac{\tilde\theta^{A_y}}{\theta^{A_y}}\right) - \log\min_{y\in\mathbb{Y}}\left(\frac{\tilde\theta^{A_y}}{\theta^{A_y}}\right) \\
 &=& \log\max_{y\in\mathbb{Y}}\left(\frac{\tilde\theta_{S_j}^{A_{y,S_j}}\tilde\theta_{-S_j}^{A_{y,-S_j}}}{\theta_{S_j}^{A_{y,S_j}}\theta_{-S_j}^{A_{y,-S_j}}}\right)-\log\min_{y\in\mathbb{Y}}\left(\frac{\tilde\theta_{S_j}^{A_{y,S_j}}\tilde\theta_{-S_j}^{A_{y,-S_j}}}{\theta_{S_j}^{A_{y,S_j}}\theta_{-S_j}^{A_{y,-S_j}}}\right)\\
 &=& \log\max_{y\in\mathbb{Y}}\left(\frac{\tilde\theta_{S_j}}{\theta_{S_j}}\right)^{A_{y,S_j}}-\log\min_{y\in\mathbb{Y}}\left(\frac{\tilde\theta_{S_j}}{\theta_{S_j}}\right)^{A_{y,S_j}}\\
 &=&\log\max_{y\in\mathbb{Y}_{S_j}^{\neq}}\left(\frac{\tilde\theta_{S_j}}{\theta_{S_j}}\right)^{A_{y,S_j}} - \log\min_{y\in\mathbb{Y}_{S_j}^{\neq}}\left(\frac{\tilde\theta_{S_j}}{\theta_{S_j}}\right)^{A_{y,S_j}},
\end{eqnarray*}
where the last equality holds since, for all $y\in\mathbb{Y}_{S_j}^{=}$, $(\tilde\theta_{S_j}/\theta_{S_j})^{A_{y,S_j}}=1$ and there are always both larger and smaller ratios between varied and original parameters.

The form of the CD distance under different covariation schemes follows from equation (\ref{eq:theo21}) by plugging-in their definition given in Definition \ref{def:covar}. 
\end{proof}

One of the reasons why the CD distance is commonly used for sensitivity analysis in BNs is that, for a single parameter variation, the distance between the BN distributions equals the distance between the single conditional probability distributions associated to the varied parameters \citep{Chan2002}. Theorem \ref{theo:2} demonstrates that this is true in general for non-multilinear models since the distance only depends on the parameter $\theta_{S_j}$.

\begin{example}
\label{ex:5}
As in Example \ref{ex:3}, suppose the parameter $\theta_4$ is varied in the two staged trees from the educational example of Section \ref{sec:23}. From the results of \citet{Leonelli2017}, it can be deduced that for the multilinear tree, the CD distance between the original and varied distributions is simply
\begin{equation}
\label{eq:cduni}
\log\max_{i=3,4,5}\frac{\tilde\theta_i}{\theta_i}-\log\min_{i=3,4,5}\frac{\tilde\theta_i}{\theta_i}.
\end{equation}
Conversely, using Theorem \ref{theo:2}, for the non-multilinear staged tree this equals
\begin{equation}
\label{eq:cd}
\log\max\left\{\frac{\tilde\theta_3}{\theta_3},\frac{\tilde\theta_3^2}{\theta_3^2},\frac{\tilde\theta_4^2}{\theta_4^2},\frac{\tilde\theta_5^2}{\theta_5^2},\frac{\tilde\theta_3\tilde\theta_4}{\theta_3\theta_4},\frac{\tilde\theta_3\tilde\theta_5}{\theta_3\theta_5},\frac{\tilde\theta_4\tilde\theta_5}{\theta_4\theta_5}\right\}-\log\min \left\{\frac{\tilde\theta_3}{\theta_3},\frac{\tilde\theta_3^2}{\theta_3^2},\frac{\tilde\theta_4^2}{\theta_4^2},\frac{\tilde\theta_5^2}{\theta_5^2},\frac{\tilde\theta_3\tilde\theta_4}{\theta_3\theta_4},\frac{\tilde\theta_3\tilde\theta_5}{\theta_3\theta_5},\frac{\tilde\theta_4\tilde\theta_5}{\theta_4\theta_5}\right\}.
\end{equation}
The specific form of the CD distance for uniform covariation can be deduced from equation (\ref{eq:cd}) by simply substituting $\tilde\theta_3$ and $\tilde\theta_5$ with $(1-\tilde\theta_4)/2$. For proportional covariation the CD distance greatly simplifies and can be written as
\[
\log\max\left\{\frac{\tilde\theta_4^2}{\tilde\theta_4^2},\frac{1-\tilde\theta_4}{1-\theta_4},\frac{(1-\tilde\theta_4)^2}{(1-\theta_4)^2},\frac{\tilde\theta_4(1-\tilde\theta_4)}{\theta_4(1-\theta_4)}\right\}-\log\min\left\{\frac{\tilde\theta_4^2}{\tilde\theta_4^2},\frac{1-\tilde\theta_4}{1-\theta_4},\frac{(1-\tilde\theta_4)^2}{(1-\theta_4)^2},\frac{\tilde\theta_4(1-\tilde\theta_4)}{\theta_4(1-\theta_4)}\right\},
\]
which, as formalized by Theorem \ref{theo:2}, only depends on the original and varied values of $\theta_4$. 

The CD distances for proportional and uniform covariation and any possible varied value of $\theta_4$ are reported in Figure \ref{fig:CD}. Although for the two trees the shape of the distances are similar, for the non-multilinear tree the CD distance is larger. Notice that although for this application the CD distance for proportional covariation is always smaller than for uniform covariation, Example \ref{ex:3} above gives an illustration where this is not the case.
\end{example}

\begin{figure}
\begin{center}
\begin{subfigure}[b]{0.5\linewidth}
    \centering\includegraphics[scale=0.4]{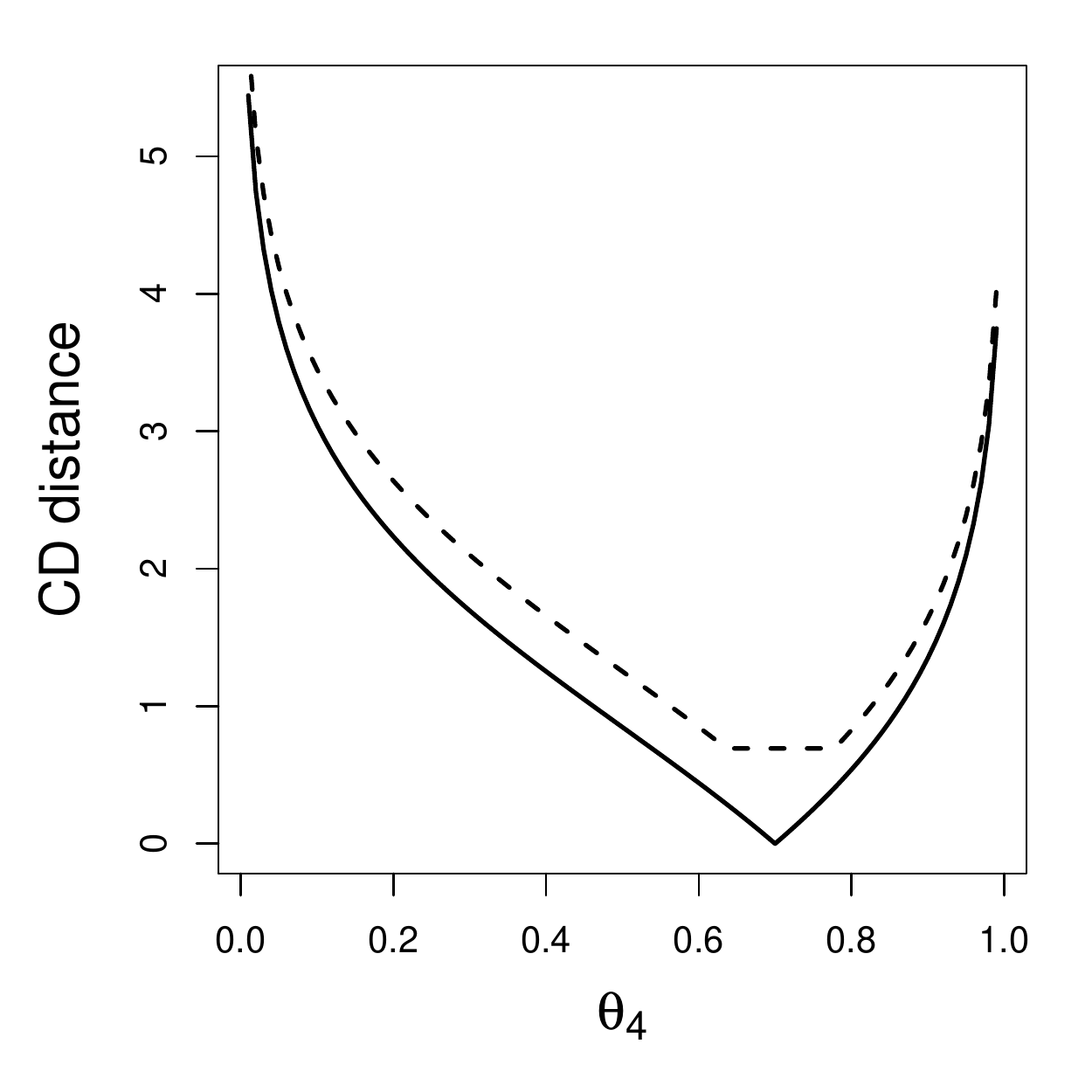}
    \caption{Multilinear tree.\label{fig:CD1}}
  \end{subfigure}%
  \begin{subfigure}[b]{0.5\linewidth}
    \centering\includegraphics[scale=0.4]{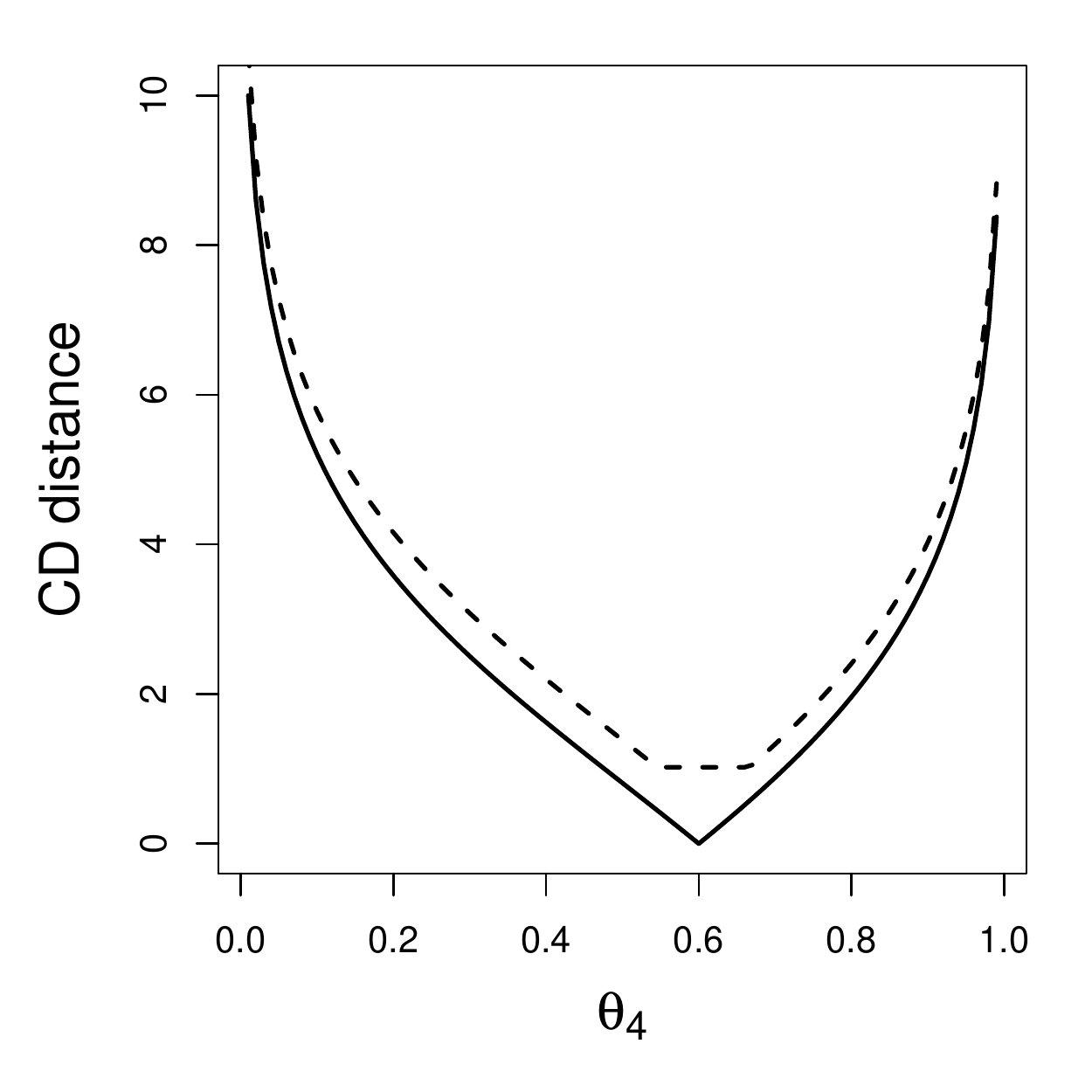}
    \caption{Non-multilinear tree.\label{fig:CD2}}
  \end{subfigure}%
  \end{center}
\caption{CD distance for the staged trees of Section \ref{sec:23} for variations of  $\theta_4$. Full lines: proportional covariation; Dashed lines: uniform covariation. \label{fig:CD}}
\end{figure}

Theorem \ref{theo:2} and Example \ref{ex:5} show that for single parameter variations the CD distance in non-multilinear models does not simply correspond to the distance between distributions defined over one element of the partition $S$ (as in equation (\ref{eq:cduni}) for the multilinear staged tree). However, there are parameter variations in non-multilinear model where this is the case as formalized by Corollary \ref{cor:4}

\begin{corollary}
\label{cor:4}
In the notation of Theorem \ref{theo:2}, suppose $0\leq |A_{y,S_j}|\leq 1$ for all $y\in\mathbb{Y}_{S_j}^{\neq}$. Then
\begin{itemize}
\item for a generic $\theta_i$-covariation scheme $\sigma$
 \begin{equation}
 \label{eq}
 \mathcal{D}_{\operatorname{CD}}(\sigma(\operatorname{P}),\operatorname{P})= \log\max_{i\in S_j}\frac{\tilde\theta_i}{\theta_i}-\log\min_{i\in S_j}\frac{\tilde\theta_i}{\theta_i}
 \end{equation}
 \item for proportional covariation $\sigma_{\operatorname{pro}}$
 \[
 \mathcal{D}_{\operatorname{CD}}(\sigma_{\operatorname{pro}}(\operatorname{P}),\operatorname{P})= \left|\log\frac{\tilde\theta_i}{\theta_i}-\log\frac{1-\tilde\theta_i}{1-\theta_i}\right|
 \]
\item for uniform covariation $\sigma_{\operatorname{uni}}$
\[
 \mathcal{D}_{\operatorname{CD}}(\sigma_{\operatorname{uni}}(\operatorname{P}),\operatorname{P})= \log\max \left\{\frac{\tilde\theta_i}{\theta_i},\frac{1-\tilde\theta_i}{(\# S_j-1)\min_{k\in S_j^{-i}}\theta_k}\right\}-\log\min \left\{\frac{\tilde\theta_i}{\theta_i},\frac{1-\tilde\theta_i}{(\# S_j-1)\max_{k\in S_j^{-i}}\theta_k}\right\}
\]
\item for linear covariation $\sigma_{\operatorname{lin}}$, where $\delta_k=-\gamma_k$ for all $k\in S_j^{-i}$,
\[
 \mathcal{D}_{\operatorname{CD}}(\sigma_{\operatorname{lin}}(\operatorname{P}),\operatorname{P})= \log\max \left\{\frac{\tilde\theta_i}{\theta_i},\frac{1-\tilde\theta_i}{\min_{k\in S_j^{-i}}\delta_k^{-1}\theta_k}\right\}-\log\min \left\{\frac{\tilde\theta_i}{\theta_i},\frac{1-\tilde\theta_i}{\max_{k\in S_j^{-i}}\delta_k^{-1}\theta_k}\right\}
\]
\end{itemize}
\end{corollary}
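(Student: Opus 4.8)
The plan is to use the hypothesis $0\leq|A_{y,S_j}|\leq1$ to collapse every monomial $(\tilde\theta_{S_j}/\theta_{S_j})^{A_{y,S_j}}$ occurring in Theorem~\ref{theo:2} to a single ratio of parameters, after which equation~(\ref{eq}) follows directly from equation~(\ref{eq:theo21}), and the other three identities follow by substituting the schemes of Definition~\ref{def:schemes} and simplifying the resulting maxima and minima. Concretely, for $y\in\mathbb{Y}_{S_j}^{\neq}$ the segment $A_{y,S_j}$ has at least one non-zero entry, so $|A_{y,S_j}|\geq1$; combined with the hypothesis $|A_{y,S_j}|\leq1$ and the non-negative integrality of $A$, this forces $A_{y,S_j}$ to be a standard basis vector $e_{\ell(y)}$ for a unique index $\ell(y)\in S_j$, whence $(\tilde\theta_{S_j}/\theta_{S_j})^{A_{y,S_j}}=\tilde\theta_{\ell(y)}/\theta_{\ell(y)}$. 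Since every parameter of the model carries a non-zero exponent in at least one atom (for staged trees this is automatic, as the edge labelled $\theta_\ell$ lies on some root-to-leaf path), the map $y\mapsto\ell(y)$ is surjective onto $S_j$, so as $y$ ranges over $\mathbb{Y}_{S_j}^{\neq}$ the ratio $\tilde\theta_{\ell(y)}/\theta_{\ell(y)}$ ranges over $\{\tilde\theta_i/\theta_i:i\in S_j\}$; substituting into equation~(\ref{eq:theo21}) yields equation~(\ref{eq}).

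For the three named schemes I would then start from equation~(\ref{eq}) and insert the formulas of Definition~\ref{def:schemes}. Under $\sigma_{\operatorname{pro}}$ one has $\tilde\theta_k/\theta_k=(1-\tilde\theta_i)/(1-\theta_i)$ for every $k\in S_j^{-i}$, so $\{\tilde\theta_i/\theta_i:i\in S_j\}$ reduces to the pair $\{\tilde\theta_i/\theta_i,\ (1-\tilde\theta_i)/(1-\theta_i)\}$, both values being attained, and $\log\max-\log\min$ of a two-element set equals the absolute difference of the logarithms. Under $\sigma_{\operatorname{uni}}$ one has $\tilde\theta_k/\theta_k=(1-\tilde\theta_i)/((\#S_j-1)\theta_k)$, which is maximized over $k\in S_j^{-i}$ at $\min_k\theta_k$ and minimized at $\max_k\theta_k$; combining with the term $\tilde\theta_i/\theta_i$ gives the stated expression. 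Under $\sigma_{\operatorname{lin}}$ with $\delta_k=-\gamma_k$ one has $\tilde\theta_k=\delta_k(1-\tilde\theta_i)$, hence $\tilde\theta_k/\theta_k=(1-\tilde\theta_i)/(\delta_k^{-1}\theta_k)$, and the same extremal analysis over $k\in S_j^{-i}$ together with the term $\tilde\theta_i/\theta_i$ produces the last identity.

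The only genuinely delicate point is the reduction step at the start: one must combine non-negative integrality of $A$ with $|A_{y,S_j}|\leq1$ to conclude that each relevant monomial really is a single ratio rather than a genuine product, and one must argue that the indices $\ell(y)$ swept out exhaust all of $S_j$ rather than a proper subset — this is where the mild (and for staged trees automatic) assumption that every parameter is actually used enters. Everything after that is bookkeeping: writing out the covariation formulas and taking maxima and minima of functions monotone in $\tilde\theta_i$.
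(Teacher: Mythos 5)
Your proposal is correct and follows essentially the same route as the paper: equation~(\ref{eq}) is obtained from equation~(\ref{eq:theo21}) by imposing $0\leq|A_{y,S_j}|\leq 1$, and the scheme-specific expressions then follow by substituting the covariation definitions, a computation the paper simply delegates to \citet{Renooij2014} by observing that (\ref{eq}) is the CD distance between a single conditional distribution and its varied version. Your explicit basis-vector reduction and your remark that every index in $S_j$ must actually be attained by some atom of $\mathbb{Y}_{S_j}^{\neq}$ are details the paper leaves implicit, and they are correctly handled.
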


\begin{proof}
Equation (\ref{eq}) follows from equation (\ref{eq:theo21}) by imposing the condition $0\leq |A_{y,S_j}|\leq 1$. Equation (\ref{eq}) then coincides to the CD distance between one conditional probability distribution in BNs and its varied version and the specific form of the distance under different covariation schemes can be derived as in \citet{Renooij2014}.
\end{proof}

Corollary \ref{cor:4} generalizes the results of \citet{Renooij2014}, which derive the specific form of the sensitivity function for various covariation schemes in BNs, to the case of non-multilinear models for specific choices of varied parameter. Importantly, the form of the CD distance derived in Corollary \ref{cor:4} has the very important consequence that for some varied parameters proportional variation can be shown to be optimal.

\begin{theorem}
\label{theo:3}
Under the conditions of Corollary \ref{cor:4}, proportional covariation minimizes the CD distance between the original and varied distribution amongst all possible covariation schemes.
\end{theorem}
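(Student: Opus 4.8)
The plan is to exploit the explicit form of the CD distance under the condition $0\leq|A_{y,S_j}|\leq 1$ given in Corollary~\ref{cor:4}. By equation~(\ref{eq}), for any $\tilde\theta_i$-covariation scheme $\sigma$ the distance is $\log\max_{k\in S_j}(\tilde\theta_k/\theta_k)-\log\min_{k\in S_j}(\tilde\theta_k/\theta_k)$, where the ratio for the varied parameter, $\tilde\theta_i/\theta_i$, is fixed by the variation and does not depend on the scheme. Hence minimising the CD distance over all covariation schemes amounts to choosing the covaried values $\tilde\theta_k$, $k\in S_j^{-i}$, so as to make the spread of the full collection of ratios $\{\tilde\theta_k/\theta_k : k\in S_j\}$ as small as possible, subject to the single linear constraint $\sum_{k\in S_j^{-i}}\tilde\theta_k = 1-\tilde\theta_i$ (and positivity).

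First I would set $t=\tilde\theta_i/\theta_i$ and $r_k=\tilde\theta_k/\theta_k$ for $k\in S_j^{-i}$, so the constraint becomes $\sum_{k\in S_j^{-i}}\theta_k r_k = 1-\tilde\theta_i$, i.e.\ a weighted average of the $r_k$ with weights $\theta_k/(1-\theta_i)$ equals exactly $(1-\tilde\theta_i)/(1-\theta_i)=:s$. Since the weighted average of the $r_k$ is pinned to $s$, we always have $\min_k r_k \leq s \leq \max_k r_k$, with equality throughout precisely when all $r_k$ are equal to $s$ — which is exactly the proportional scheme, $\tilde\theta_k=\frac{1-\tilde\theta_i}{1-\theta_i}\theta_k$. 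For any other feasible choice, $\max_{k\in S_j^{-i}} r_k \geq s \geq \min_{k\in S_j^{-i}} r_k$ with at least one strict inequality, so the overall maximum $\max\{t,\max_k r_k\}$ can only increase and the overall minimum $\min\{t,\min_k r_k\}$ can only decrease relative to the proportional case $\max\{t,s\}-\min\{t,s\}=|\log t-\log s|$. I would then conclude that the CD distance is minimised (uniquely among schemes fixing $\tilde\theta_i$) by proportional covariation, its minimal value being $\left|\log\frac{\tilde\theta_i}{\theta_i}-\log\frac{1-\tilde\theta_i}{1-\theta_i}\right|$ as already recorded in Corollary~\ref{cor:4}.

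A couple of details need care. One must check that collapsing all the $r_k$ to the common value $s$ is actually feasible, i.e.\ yields a valid probability vector: positivity of $\tilde\theta_k=s\,\theta_k$ is immediate since $s>0$ and $\theta_k>0$, and the sum is $s\sum_{k\in S_j^{-i}}\theta_k = s(1-\theta_i)=1-\tilde\theta_i$, as required. One should also note that the argument is genuinely about the unordered set of ratios over $S_j$, so the presence or absence of the fixed term $t$ in the max/min does not disrupt the comparison: adding $t$ to both sides of the inequalities $\max_k r_k\geq s\geq\min_k r_k$ preserves them after taking maxima/minima with $t$. The main obstacle — though it is a mild one — is making the optimality argument rigorous over the full class of covariation schemes (not merely linear ones): the point is that Definition~\ref{def:covar} allows the covaried values to be arbitrary, so the optimisation is over the whole simplex slice $\{\tilde\theta_{S_j^{-i}} : \tilde\theta_{S_j^{-i}}>0,\ |\tilde\theta_{S_j^{-i}}|=1-\tilde\theta_i\}$, and the spread functional $\max_k r_k-\min_k r_k$ (after inserting $t$) is minimised on the affine constraint exactly at the barycentric point where all $r_k$ agree. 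Framing this as: "a fixed weighted average forces the spread to be nonnegative, with zero attained only at the constant vector" is the cleanest route and avoids any Lagrange-multiplier computation.
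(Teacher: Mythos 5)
Your argument is correct, but it follows a different route from the paper. The paper's proof is essentially a reduction-plus-citation: under the hypothesis of Corollary~\ref{cor:4}, equation~(\ref{eq}) shows the CD distance coincides with the CD distance between a single (conditional) probability distribution over $S_j$ and its varied version, and the paper then simply invokes \citet{Chan2002}, where proportional covariation was proven to minimize that single-distribution distance. You instead prove that underlying fact from scratch: writing the ratios $r_k=\tilde\theta_k/\theta_k$ and observing that the sum-to-one constraint pins their $\theta$-weighted average at $s=(1-\tilde\theta_i)/(1-\theta_i)$, so that $\min_k r_k\leq s\leq\max_k r_k$ for every admissible scheme, with equality (all $r_k=s$) precisely for the proportional scheme; inserting the fixed ratio $t=\tilde\theta_i/\theta_i$ into the max/min then gives $\mathcal{D}_{\operatorname{CD}}\geq\log\max\{t,s\}-\log\min\{t,s\}=\bigl|\log(\tilde\theta_i/\theta_i)-\log((1-\tilde\theta_i)/(1-\theta_i))\bigr|$, attained by proportional covariation. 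This is sound (your feasibility check of the proportional point and the handling of the fixed term $t$ are the right details to verify, and your observation that a pinned weighted average forces $\min_k r_k<s<\max_k r_k$ for any non-constant ratio vector even yields strict optimality, i.e.\ uniqueness among schemes fixing $\tilde\theta_i$ whenever $\tilde\theta_i\neq\theta_i$, which the paper does not claim). What the paper's approach buys is brevity and an explicit link to the known BN result; what yours buys is a self-contained, elementary optimality proof over the full class of schemes of Definition~\ref{def:covar}, without relying on the external reference.
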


\begin{proof}
The theorem follows from equation (\ref{eq}) which is the CD distance between one conditional probability distribution in BNs and its varied version. As proven in \citet{Chan2002} this distance is minimized by proportional covariation.
\end{proof}

Theorem \ref{theo:3} therefore extends the results of \citet{Chan2002} and \citet{Leonelli2017} which prove the optimality of proportional covariation for BNs and multilinear MMs to specific sensitivity analyses in non-multilinear models.

\begin{example}
For the non-multilinear staged tree in Figure \ref{fig:tree3}, consider the stage $\{F_{1,A},F_{1,B}\}$. Suppose there is an additional edge coming out of this stage ending in a leaf (for example by splitting the fail result, into badly failed and moderately fail). Then one could show that the columns associated to the parameters of the stage probability distribution in the $A$ matrix have only zero or one entries. This can also be seen graphically since $F_{1,A}$ and $F_{1,B}$ are not along a same root-to-leaf path. Therefore, by Theorem \ref{theo:3}, if one probability from this stage distribution is varied then by proportionally covarying the remaining parameters the CD distance between the original staged tree distribution and the new one is minimized.
\end{example}

\subsection{$\phi$-divergences in non-multilinear models}

Another class of divergences which is often used in practice is the so-called $\phi$-divergence \citep{Ali1966}.

\begin{definition}
The $\phi$-divergence from $\tilde{\operatorname{P}}$ to $\operatorname{P}$ over a discrete sample space $\mathbb{Y}$ is 
\[
\mathcal{D}_\phi(\tilde{\operatorname{P}},\operatorname{P})=\sum_{y\in\mathbb{Y}}\operatorname{P}(y)\phi\left(\frac{\tilde{\operatorname{P}}(y)}{\operatorname{P}(y)}\right), \hspace{1cm} \phi\in\Phi,
\]
where $\Phi$ is the class of convex functions $\phi(x)$, $x\geq 0$, such that $\phi(1)=0$, $0\phi(0/0)=0$ and $0\phi(x/0)=\lim_{x\rightarrow +\infty}\phi(x)/x$.
\end{definition}

By definition, and conversely to CD distances, $\phi$-divergences are not symmetric, i.e. $\mathcal{D}_\phi(\tilde{\operatorname{P}},\operatorname{P})\neq \mathcal{D}_\phi(\operatorname{P},\tilde{\operatorname{P}})$. Notice that this class includes a large number of commonly used divergences, most notably Kullback-Leibler divergence \citep{Kullback1951} for $\phi(x)=x\log(x)$ and the inverse Kullback-Leibler divergence for $\phi(x)=-\log(x)$.

\begin{proposition}
Let $\operatorname{P}\in MM(A,\theta,S)$ and suppose the parameter $\theta_i$ is varied, where $i\in S_j$. Then for a generic $\theta_i$-covariation scheme $\sigma$
\begin{equation}
\label{eq:phi}
\mathcal{D}_\phi(\sigma(\operatorname{P}),\operatorname{P})=\sum_{y\in\mathbb{Y}_{S_j}^{\neq}}\theta^{A_y}\phi\left(\frac{\tilde\theta_{S_j}^{A_{y,S_j}}}{\theta_{S_j}^{A_{y,S_j}}}\right).
\end{equation}
\end{proposition}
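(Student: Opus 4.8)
The plan is to mirror the argument used for Theorem~\ref{theo:2}, starting directly from the definition of the $\phi$-divergence and exploiting the monomial factorisation of atomic probabilities together with the fact that a $\theta_i$-covariation scheme leaves $\theta_{-S_j}$ untouched.

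First I would record, for every $y\in\mathbb{Y}$,
\[
\operatorname{P}(y)=\theta^{A_y}=\theta_{S_j}^{A_{y,S_j}}\theta_{-S_j}^{A_{y,-S_j}},\qquad
\sigma(\operatorname{P})(y)=\tilde\theta^{A_y}=\tilde\theta_{S_j}^{A_{y,S_j}}\theta_{-S_j}^{A_{y,-S_j}},
\]
where the second equality uses that $\sigma$ fixes $\theta_{-S_j}$ (Definition~\ref{def:covar}), exactly as in equation~(\ref{eq:theo11}). Substituting these into the definition of $\mathcal{D}_\phi$ and cancelling the common factor $\theta_{-S_j}^{A_{y,-S_j}}$ inside the argument of $\phi$ gives
\[
\mathcal{D}_\phi(\sigma(\operatorname{P}),\operatorname{P})=\sum_{y\in\mathbb{Y}}\theta^{A_y}\,\phi\!\left(\frac{\tilde\theta_{S_j}^{A_{y,S_j}}}{\theta_{S_j}^{A_{y,S_j}}}\right),
\]
so that the argument of $\phi$ depends only on $\theta_{S_j}$ and $\tilde\theta_{S_j}$. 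Since $\operatorname{P}$ is strictly positive every $\theta_\ell>0$, so no division by zero occurs and the boundary conventions in the definition of $\Phi$ are never invoked.

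Next I would restrict the range of summation. For $y\in\mathbb{Y}_{S_j}^{=}$ one has $A_{y,i}=0$ for all $i\in S_j$, hence $A_{y,S_j}=0$ as a vector and the whole ratio inside $\phi$ equals $1$; because $\phi(1)=0$ for every $\phi\in\Phi$, all such terms vanish, leaving precisely the sum over $y\in\mathbb{Y}_{S_j}^{\neq}$, which is the claimed identity~(\ref{eq:phi}).

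There is no serious obstacle here: the only point needing a little care is the exponent bookkeeping, namely that $A_{y,S_j}=0$ kills the entire monomial ratio and not merely one factor, together with the appeal to strict positivity to discard the degenerate cases. Unlike Theorem~\ref{theo:2} no maximisation or minimisation is present, so the specialisations of~(\ref{eq:phi}) to the proportional, uniform and linear schemes follow, just as there, by plugging in the relevant expressions from Definition~\ref{def:schemes}.
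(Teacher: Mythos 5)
Your proposal is correct and follows essentially the same route as the paper: factorise each atomic probability as $\theta_{S_j}^{A_{y,S_j}}\theta_{-S_j}^{A_{y,-S_j}}$, use that $\sigma$ fixes $\theta_{-S_j}$ to cancel the common factor inside the argument of $\phi$, and then drop the terms indexed by $\mathbb{Y}_{S_j}^{=}$. The only difference is cosmetic: you justify the vanishing of those terms by $\phi(1)=0$, which under the strict-positivity assumption on $\operatorname{P}$ is indeed the apt reason, whereas the paper invokes the boundary convention $0\phi(0/0)=0$.
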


\begin{proof}
Notice that
\[
\mathcal{D}_\phi(\sigma(\operatorname{P}),\operatorname{P})=\sum_{y\in\mathbb{Y}}\theta^{A_y}\phi\left(\frac{\tilde\theta^{A_y}}{\tilde\theta^{A_y}}\right)=\sum_{y\in\mathbb{Y}}\theta^{A_y}\phi\left(\frac{\tilde\theta_{S_j}^{A_{y,S_j}}}{\tilde\theta_{S_j}^{A_{y,S_j}}}\right)=\sum_{y\in\mathbb{Y}_{S_j}^{\neq}}\theta^{A_y}\phi\left(\frac{\tilde\theta_{S_j}^{A_{y,S_j}}}{\theta_{S_j}^{A_{y,S_j}}}\right)
\]
where the last equality follows by noting that for all $y\in\mathbb{Y}_{S_j}^=$ the term in the summation is $0\phi(0/0)$ which by definition is equal to zero.
\end{proof}

Notice that as for BNs and multilinear MMs, $\phi$-divergences do not depend on the parameter vector $\theta_{S_j}$ of the varied parameter only, but on the full $\theta$. Therefore, their computation in practice is more expensive than for CD distances. Furthermore, due to this extra complexity, $\phi$-divergences do not simplify greatly for specific covariation schemes. To see this, the $\phi$-divergence under proportional covariation can be written as
\[
\mathcal{D}_\phi(\sigma_{\textnormal{pro}}(\operatorname{P}),\operatorname{P})=\sum_{y\in\mathbb{Y}_{S_j}^{\neq}}\theta^{A_y}\phi\left(\left(\frac{\tilde\theta_i}{\theta_i}\right)^{A_{y,i}}\left(\frac{1-\tilde\theta_i}{1-\theta_i}\right)^{|A_{y,S_j^{-i}}|}\right),
\]
which still depends on the full parameter vector $\theta$. The specific form of the $\phi$-divergence under other covariation schemes can be easily deduced by plugging-in their definition into equation (\ref{eq:phi}).

\begin{example}
The Kullback-Leibler divergences for proportional and uniform covariation and any possible varied value of $\theta_4$ in the trees of Section \ref{sec:23} are reported in Figure \ref{fig:CD}. The form and the value of the divergences for the two trees are similar. Notice that for this example the Kullback-Leibler divergence is always smaller for proportional covariation than uniform covariation, although there is no theoretical guarantee that this is always the case.
\end{example}

\begin{figure}
\begin{center}
\begin{subfigure}[b]{0.5\linewidth}
    \centering\includegraphics[scale=0.4]{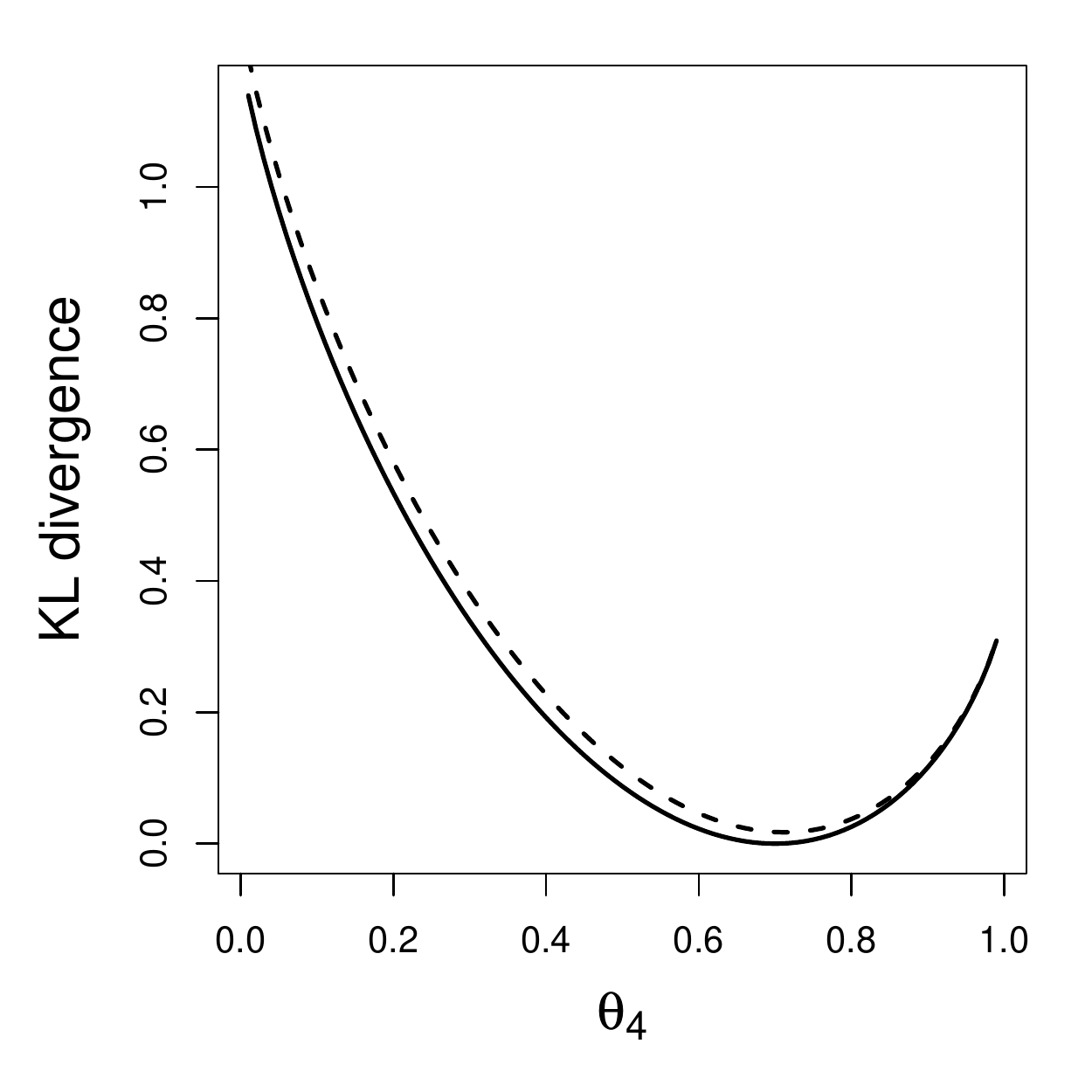}
    \caption{Multilinear tree.\label{fig:KL1}}
  \end{subfigure}%
  \begin{subfigure}[b]{0.5\linewidth}
    \centering\includegraphics[scale=0.4]{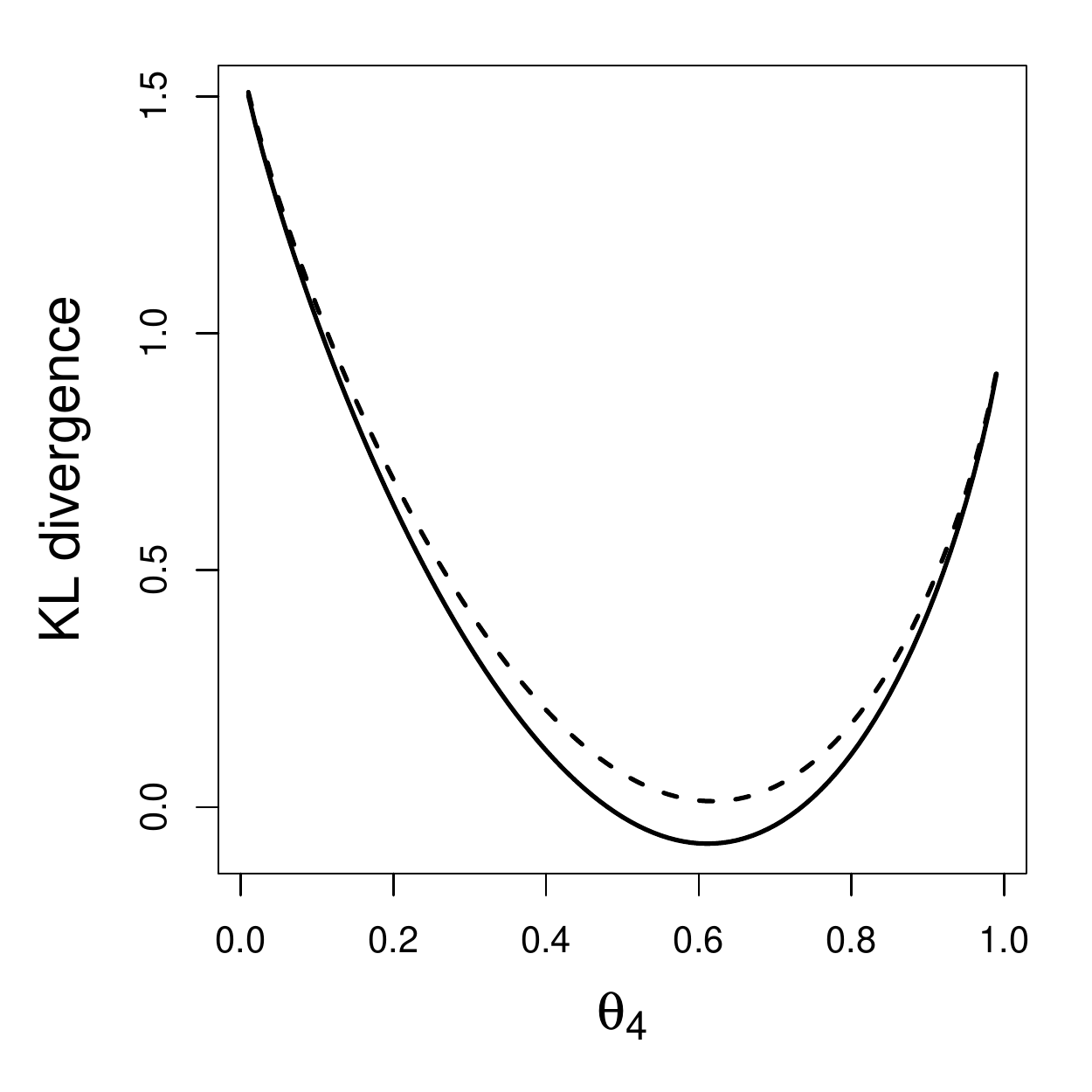}
    \caption{Non-multilinear tree.\label{fig:KL2}}
  \end{subfigure}%
  \end{center}
\caption{KL divergence for the staged trees of Section \ref{sec:23} for variations of  $\theta_4$. Full lines: proportional covariation; Dashed lines: uniform covariation. \label{fig:KL}}
\end{figure}

\section{Discussion}

The representation of probabilistic graphical models in terms of the defining atomic monomial probabilities has proven useful in sensitivity analysis. Here a general approach for this type of analyses in models whose atomic probabilities are non-multilinear, including DBNs, hidden Markov models and staged trees, is introduced. The form of the sensitivity functions and various distances/divergences is derived here for a variety of covariation schemes, and their properties studied. In general these are different to their counterparts in multilinear MMs and exhibit a more complex structure. One optimality result for proportional covariation is also presented, giving an even stronger justification for the use of this scheme in practice.

The examples presented suggest that proportional covariation minimizes both CD distances and $\phi$-divergences under much milder conditions than the ones given in Theorem \ref{theo:3}. However, it is currently unknown under which conditions proportional covariation is optimal in general. General conditions of optimality in multilinear models have been derived only recently in \citet{Leonelli2018}. The identification of these in the more general case of non-multilinear case is the subject of ongoing research.

Software for carrying out sensitivity analysis in practice is still very limited \citep[see][for a notable exception]{samIam}. A package for sensititivity analysis in BNs, and more generally for MMs, in the open-source R software \citep{R} is currently under development. The development of such a package is critical and could be of great benefit for the whole AI community.

\section*{Acknowledgements}
The author kindly thanks Christiane G\"{o}rgen and Jim Q. Smith for comments on previous versions of the manuscript.

\section*{References}
\bibliographystyle{plainnat} 
\bibliography{bib1}

\end{document}